\documentclass[12pt]{article}
\usepackage{amsmath, amssymb}
\usepackage{hyperref}
\usepackage{tikz}
\usepackage[latin1]{inputenc}
\usepackage{amsmath}
\usepackage{amsfonts}
\usepackage{amssymb}
\usepackage{amsthm}
\usepackage{mathrsfs} 
\usepackage{mathtools}
\usepackage{graphicx}
\usepackage[square,sort,comma,numbers]{natbib}
\usepackage[left=1.25in,right=1in,top=1in,bottom=1in]{geometry}
\usepackage{amsfonts}
\usepackage{xcolor}

\usepackage{setspace}

\usepackage[english]{babel}
\usepackage{hyperref}
\usepackage{natbib}
\hypersetup{
	colorlinks=true,
	linkcolor=blue,
	citecolor=blue,
	urlcolor=blue,
}

\theoremstyle{definition}
\newtheorem{definition}{Definition}[section]
\newtheorem{theorem}{Theorem}[section]
\newtheorem{corollary}{Corollary}[section]
\newtheorem{lemma}{Lemma}[section]
\newtheorem{proposition}{Proposition}
\theoremstyle{remark}
\newtheorem{remark}{Remark}[section]

\theoremstyle{definition}
\newtheorem{example}{Example}[section]

\begin{document}
	
	\title{On the Eccentricity Laplacian and Eccentricity Signless Laplacian Matrices of a Graph}
	\author{Keshav Saini \footnote{identity.keshav@gmail.com, Malaviya National Institute of Technology, Jaipur, Rajasthan, India 302017} \and Anubha Jindal \footnote{anubha.maths@mnit.ac.in, Malaviya National Institute of Technology Jaipur, Rajasthan, India 302017} \and K. Palpandi \footnote{kpandiiitm@gmail.com, National Institute of Technology Calicut, Kerala, India, 673601}}
	\date{}
	\maketitle
	\begin{abstract}
In this paper, we introduce the Laplacian and the signless Laplacian for the eccentricity matrix of a connected graph, referred to as the eccentricity Laplacian and the eccentricity signless Laplacian, respectively. We establish the equivalence among the eccentricity Laplacian, eccentricity signless Laplacian, and eccentricity spectrum for different classes of graphs. We provide spectral characterization of $\mathcal{E}$-bipartite graphs  by the symmetry of $\mathcal{E}$-spectrum and the similarity of these Laplacian matrices.	
	\end{abstract}
	\noindent \textbf{Keywords:} Eccentricity matrix, Eccentricity Laplacian, Eccentricity Signless Laplacian, $\mathcal{E}$-bipartite graphs\\
	\noindent \textbf{AMS Classification:}  05C12; 05C50; 05C76
	
	\section{Introduction}
	We begin by reviewing a few key definitions. Only simple, undirected, and finite graphs are examined in this study. \(G = G(V, E)\), where \( V \) is the graph's vertex set, and \( E \) is its edge set, are the standard notations for a graph. The \textit{order} of a graph is the number of its vertices.
 The adjacency matrix \( \mathcal{A}(G) \) of \( G \) is a \( 0\text{-}1 \) \( n \times n \) matrix that is defined by $ a_{ij} = 1 \quad \text{if and only if } v_iv_j \in E,$ and indexed by the vertices of \( G \). A vertex is said to be \textit{universal vertex}, if it is adjacent to all the remaining vertices. The number of vertices which are adjacent to $v_i$ is the \textit{degree} of $v_i$, and is denoted by $deg(v_i)$. Let $U(G)$ denotes the collection of universal vertices, i.e. $U(G)=\{v_i\in V(G): deg(v_i)=n-1\}$. Let $G$ and $H$ be two graphs, then the \textit{join} of $G$ and $H$ is denoted by $G\vee H$, and defined as each vertex of $G$ is adjacent to every vertex of $H$. 
  
 The Laplacian of \( G \) is the matrix $\mathcal{L}(G) = \text{Diag}(\text{Deg($G$)}) - \mathcal{A}(G),$ where \( \text{Diag}(\text{Deg($G$)}) \) is the diagonal matrix whose diagonal entries are the degrees of the vertices. The \textit{signless Laplacian} of \( G \) is the matrix $ \mathcal{Q}(G) = \text{Diag}(\text{Deg($G$)}) + \mathcal{A}(G)$. Let $\{\mu_1, \mu_2, \dots, \mu_n\}$ and $\{q_1, q_2, \dots, q_n \}$ denote the eigenvalues of $\mathcal{L}(G)$, and $\mathcal{Q}(G)$, respectively. The second smallest eigenvalue of $\mathcal{L}(G)$ is known as  \textit{algebraic connectivity} \cite{Fiedler1973} and the associated eigenvector is \textit{Fiedler vector} \cite{Fiedler1975}. For further details on the signless Laplacian matrix and its spectrum, we refer the reader to \cite{CvetkovicI, CvetkovicII, CvetkovicIII}.

  The distance (the length of a shortest path) between two vertices, \( v_i \) and \( v_j \), in a connected graph, \( G \), is denoted by the \( d(v_i,v_j) \). The \textit{distance matrix} \( \mathcal{D}(G) \) whose rows and columns are indexed by vertices of a connected graph \( G \), is defined as $ \mathcal{D}(G)_{i,j} = d(v_i, v_j)$. The \textit{transmission} $\mathrm{Tr}(v_i)$ of a vertex $v_i$ is the sum of the distances from $v_i$ to all the remaining vertices in $G$. For the distance matrix, Aouchiche and Hansen \cite{Aouchiche} introduced the distance Laplacian and distance signless Laplacian matrices of a graph $G$, denoted by $\mathcal{D}^{\mathcal L}(G)$ and $\mathcal{D}^{\mathcal Q}(G)$, respectively, and defined by $
\mathcal{D}^{\mathcal L}(G)=\operatorname{Diag}(\mathrm{Tr}(G))-\mathcal{D}(G),$ and $
\mathcal{D}^{\mathcal Q}(G)=\operatorname{Diag}(\mathrm{Tr}(G))+\mathcal{D}(G),
$
where $\operatorname{Diag}(\mathrm{Tr}(G))$ is the diagonal matrix whose diagonal entries are the transmissions of the vertices of $G$. For more recent work on these matrices, we refer \cite{Aouchiche2014, Aouchiche2016, Aouchiche2017, Nath2014}.
  
  The \textit{eccentricity} of a vertex \( v_i \) is denoted by  \( e(v_i) \), is defined as follows: $ e(v_i) = \max \{ d(v_i, v_j): v_j \in V(G)\} $.  The \textit{diameter} and \textit{radius} of a graph $G$ are defined, respectively, as
$\mathrm{diam}(G) = \max \{ e(v_i): v_i \in V(G) \}, \quad
\mathrm{rad}(G) = \min \{ e(v_i): v_i \in V(G) \}.$ Let $C(G)=\{v_i\in V(G): e(v_i)=rad(G)\}$ and $P(G)=\{v_i\in V(G): e(v_i)=diam(G)\}$ be the collection of central and periphery vertices of $G$, respectively. In 2013, Randi\'c introduced the notion of the eccentricity matrix \cite{Randic2013} as the $D_{MAX}$-matrix and renamed it as the eccentricity matrix by Wang et al. \cite{Wang2018} in 2018.
	The eccentricity matrix $\mathcal{E}(G)$ of a connected graph $G$ is obtained from the distance matrix of $G$ by keeping maximal nonzero entries in each row and each column and leaving zeros in the remaining ones. Equivalently,\[\mathcal{E}_{ij}(G)=\begin{cases}
		d(v_i,v_j) ~~ \text{if} ~ d(v_i,v_j)=min\{e(v_i),e(v_j)\}\\
		~~~~0 ~~~~ \text{if} ~ d(v_i,v_j)<min\{e(v_i),e(v_j)\}.
	\end{cases}\]
     Although the eccentricity matrix is obtained from the distance matrix, it possesses several properties that differ from those of the distance matrix. In particular, the distance matrix of every connected graph is irreducible, whereas the eccentricity matrix of an even cycle is reducible and that of an odd cycle is irreducible. In \cite{Wang2018} and \cite{Mahato2019}, the authors proved that the eccentricity matrices of trees are irreducible.
     
For a matrix $A$, $\rho(A)$ and $\sigma(A)$ denote its \textit{spectral radius} and \textit{spectrum} respectively. The matrix $|A|$ is obtained by taking the absolute values of all entries of the matrix $A$. $I$, $J$, and $O$ denote the identity matrix, all-ones matrix, and zero matrix of appropriate order, respectively. The vectors $\mathbf{1}$ and $\mathbf{0}$ represent the all-ones vector and zero vector, respectively, while $|\mathbf{x}|$ denotes the vector obtained by taking the absolute value of each entry of $\mathbf{x}$.

The article is organized as follows: In Section~2, we introduce Laplacian-type matrices associated with the eccentricity matrix, investigate several properties of their eigenvalues, and derive the characteristic polynomials of these matrices for certain well-known graphs. In Section~3, we establish relationships between the spectra of various Laplacian-type matrices associated with the eccentricity matrix for different classes of graphs, and we study the eccentricity Laplacian matrix of graphs with diameter $2$ and discuss several consequences of these results. In Section~4, we define $\mathcal{E}$-bipartite graphs and find several characterizations for these graphs.

\section{Laplacians for the eccentricity matrix}

In this section, we define the Laplacian and the signless Laplacian for the eccentricity matrix. But first, we need the following definitions.
     
 The $\mathcal{E}$-transmission $\mathrm{Tr}_\mathcal{E}(v_i)$ of a vertex $v_i$ is defined to be the $i$-th row sum of the eccentricity matrix, that is, $$\mathrm{Tr}_\mathcal{E}(v_i)=\sum_{v_j\in V(G)}\mathcal{E}_{ij}(G).$$
    Let $\mathrm{Tr}_\mathcal{E}^{\max}$ and $\mathrm{Tr}_\mathcal{E}^{\min}$ denote the maximum and minimum $\mathcal{E}$-transmission values among the vertices of the graph.
	A connected graph $G$ is said to be $\mathcal{E}$-transmission regular of $\mathcal{E}$-degree $k$ if $\mathrm{Tr}_\mathcal{E}(v_i)=k$ for every $v_i\in V(G)$. Observe that $\mathcal{E}$-transmission regularity does not necessarily imply
degree regularity, as there exist graphs satisfying the former property but not the latter [see Figure 2].  The average $\mathcal{E}$-transmission is defined as $\overline{\mathrm{Tr}}_\mathcal{E}(G)=\frac{1}{n}\sum_{i=1}^{n}\mathrm{Tr}_\mathcal{E}(i)$. Note that $\overline{\mathrm{Tr}}_\mathcal{E}(G)=\frac{2}{n}W_\mathcal{E}(G)$, where the $\mathcal{E}$-Wiener index \cite{Mahato2023} of a connected graph $G$ is  $W_\mathcal{E}(G)=\frac{1}{2}\sum_{v_i,v_j\in V(G)}\mathcal{E}_{ij}(G)$, where $\mathcal{E}_{ij}(G)$ is the $ij$-th entry of $\mathcal{E}(G)$.

     Now we define the eccentric Laplacian of a connected graph $G$, denoted by $\mathcal{E}^\mathcal{L}(G)$, 
     
     $$\mathcal{E}^\mathcal{L}(G)={\rm Diag}(\mathrm{Tr}_\mathcal{E}(G))-\mathcal{E}(G),$$ where ${\rm Diag}(\mathrm{Tr}_\mathcal{E}(G))$ is the diagonal matrix of the $\mathcal{E}$-transmissions of the vertices in $G$. Similarly, the eccentric signless Laplacian of a connected graph $G$, denoted by $\mathcal{E}^\mathcal{Q}(G)$, is defined as
     $$\mathcal{E}^\mathcal{Q}(G)={\rm Diag}(\mathrm{Tr}_\mathcal{E}(G))+\mathcal{E}(G).$$ Observe that both $\mathcal{E^{L}}(G)$ and  $\mathcal{E^{Q}}(G)$ are positive semi-definite matrices. 
	
 Let $\{\xi_1, \xi_2, \dots, \xi_n\}$, $\{\xi_1^\mathcal{L}, \xi_2^\mathcal{L}, \dots, \xi_n^\mathcal{L}\}$, $\{\xi_1^\mathcal{Q},\xi_2^\mathcal{Q},\dots,\xi_n^\mathcal{Q}\}$ denote the spectrum of $\mathcal{E}(G)$, $\mathcal{E}^\mathcal{L}(G)$, and $\mathcal{E}^\mathcal{Q}(G)$, respectively. 	\begin{proposition}\label{first prop}
    For a connected graph $G$ of order $n$, the following assertions hold.
    \begin{enumerate}
        \item[(i)] $\sum_{i=1}^{n}\xi_i=0$ and $\sum_{i=1}^{n}\xi_i^\mathcal{L}=\sum_{i=1}^{n}\mathrm{Tr}_\mathcal{E}(i)=2W_{\mathcal{E}}(G)$. 

    \item[(ii)] $\sum_{i=1}^{n}(\xi_i)^2= 2e$, where $e=\sum_{1\leq i<j\leq n}(\mathcal{E}_{ij}(G))^2$.
     
    \item[(iii)] $\sum_{i=1}^{n}(\xi_i^\mathcal{L})^2= 2e+\sum_{i=1}^{n}(\mathrm{Tr}_\mathcal{E}(i))^2$.
    \end{enumerate}
    \end{proposition}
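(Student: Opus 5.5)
The plan is to use trace identities throughout, since each of $\mathcal{E}(G)$, $\mathcal{E}^\mathcal{L}(G)$ and $\mathcal{E}^\mathcal{Q}(G)$ is real symmetric. For a real symmetric matrix $M$ with eigenvalues $\lambda_1,\dots,\lambda_n$ we have $\sum_i\lambda_i=\operatorname{tr}(M)$ and $\sum_i\lambda_i^2=\operatorname{tr}(M^2)=\sum_{i,j}M_{ij}^2$. The last equality holds because $M$ is symmetric, so $(M^2)_{ii}=\sum_j M_{ij}M_{ji}=\sum_j M_{ij}^2$. Each assertion then reduces to reading off diagonal and off-diagonal entries.

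For (i), note first that the diagonal of $\mathcal{E}(G)$ is zero. Indeed, $d(v_i,v_i)=0$, and since $G$ is connected of order $n\ge 2$ we have $e(v_i)\ge 1$, so $d(v_i,v_i)<\min\{e(v_i),e(v_i)\}$ and the entry is $0$. If $n=1$ everything is trivially zero. Hence $\sum_i\xi_i=\operatorname{tr}\mathcal{E}(G)=0$. Next, $\operatorname{tr}\mathcal{E}^\mathcal{L}(G)=\sum_i\mathrm{Tr}_\mathcal{E}(v_i)-\operatorname{tr}\mathcal{E}(G)=\sum_i\mathrm{Tr}_\mathcal{E}(v_i)$. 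This sum equals $\sum_{i,j}\mathcal{E}_{ij}(G)$, which is $2W_\mathcal{E}(G)$ by the definition of the $\mathcal{E}$-Wiener index; equivalently, $n\overline{\mathrm{Tr}}_\mathcal{E}(G)$.

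For (ii), apply the second trace identity to $\mathcal{E}(G)$. This gives $\sum_i\xi_i^2=\sum_{i,j}\mathcal{E}_{ij}^2$. The diagonal terms vanish, and by symmetry the off-diagonal terms split into two equal halves, one over $i<j$ and one over $i>j$. So the sum equals $2\sum_{i<j}\mathcal{E}_{ij}^2=2e$.

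For (iii), apply the same identity to $\mathcal{E}^\mathcal{L}(G)$. Its diagonal entries are $\mathrm{Tr}_\mathcal{E}(v_i)$ and its off-diagonal entries are $-\mathcal{E}_{ij}(G)$. Hence the sum of squared entries is $\sum_i\mathrm{Tr}_\mathcal{E}(v_i)^2+\sum_{i\ne j}\mathcal{E}_{ij}^2=\sum_i\mathrm{Tr}_\mathcal{E}(v_i)^2+2e$. The same computation gives the identical formula for $\mathcal{E}^\mathcal{Q}(G)$.

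There is no real obstacle here. The only points needing care are the vanishing of the diagonal of $\mathcal{E}(G)$, which uses connectivity and $n\ge2$, and correctly identifying $\sum_i \mathrm{Tr}_\mathcal{E}(v_i)$ with $2W_\mathcal{E}(G)$ via the factor $\tfrac12$ in the definition.
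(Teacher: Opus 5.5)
Your proof is correct and follows essentially the paper's trace-based argument. The one cosmetic difference is in (iii): you compute $\operatorname{tr}\big((\mathcal{E}^{\mathcal L}(G))^2\big)$ directly as the sum of squared entries, whereas the paper expands $\big(\operatorname{Diag}(\mathrm{Tr}_{\mathcal E}(G))-\mathcal{E}(G)\big)^2$ and discards the cross terms; both versions rest on the vanishing diagonal of $\mathcal{E}(G)$.
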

	\begin{proof}
\begin{enumerate}
    \item[(i)] This follows directly from the definition.
    \item[(ii)] $
\sum_{i=1}^{n}\xi_i^{2}
=\operatorname{tr}\big(\mathcal{E}(G)^{2}\big)
=\sum_{i,j=1}^{n}\mathcal{E}_{ij}(G)^{2}
=2e.$
    \item[(iii)]  \[
\begin{aligned}
\sum_{i=1}^{n}(\xi_i^{\mathcal L})^2
&=\operatorname{tr}\big((\mathcal{E}^{\mathcal L}(G))^2\big) 
=\operatorname{tr}\Big(\big(\operatorname{Diag}(\mathrm{Tr}_{\mathcal E}(G))-\mathcal{E}(G)\big)^2\Big) \\
&=\operatorname{tr}\Big(\operatorname{Diag}(\mathrm{Tr}_{\mathcal E}(G))^2
-\operatorname{Diag}(\mathrm{Tr}_{\mathcal E}(G))\mathcal{E}(G) \\
&\qquad -\mathcal{E}(G)\operatorname{Diag}(\mathrm{Tr}_{\mathcal E}(G))
+\mathcal{E}(G)^2\Big) \\
&=2e+\sum_{i=1}^{n}(\mathrm{Tr}_{\mathcal E}(i))^2.
\qedhere
\end{aligned} 
\]
 \end{enumerate}
\end{proof} 
\begin{remark}
For the Laplacian spectrum we have $\sum_{i=1}^{n}\mu_i^2=2m+\sum_{i=1}^{n}deg(v_i)^2$, where $m$ denotes the number of edges in the graph. Proposition \ref{first prop} (iii) refers to the similar result for the eccentric Laplacian.
 \end{remark} 
Since $\sum_{i=1}^{n}\xi_i=0$ and $\sum_{i=1}^{n}\xi_i^{2}=2e$, where $e$ is defined as earlier, we seek an analogous relation for $\mathcal{E}^{\mathcal L}(G)$. For this purpose, we define the auxiliary eigenvalues of $\mathcal{E}^{\mathcal L}(G)$.

The auxiliary eigenvalues of $\mathcal{E}^{\mathcal L}(G)$ are defined as
\[
\eta_i=\xi_i^{\mathcal L}-\frac{1}{n}\sum_{j=1}^{n}\mathrm{Tr}_{\mathcal E}(v_j)
      =\xi_i^{\mathcal L}-\overline{\mathrm{Tr}}_{\mathcal E}(G),
\qquad i=1,2,\ldots,n.
\]
\begin{proposition}\label{second prop}
Let $\eta_1,\eta_2,\dots,\eta_n$ be the auxiliary eigenvalues of $\mathcal{E^L}(G)$. Then
\begin{enumerate}
    \item[(i)] $\sum_{i=1}^{n}\eta_i=0.$
    \item[(ii)] $\sum_{i=1}^{n}\eta_i^2 = 2E$, where 
$E = e + \frac{1}{2}\sum_{i=1}^{n}\big(\mathrm{Tr}_{\mathcal E}(v_i)\big)^2
    - \frac{1}{2n}\left(\sum_{i=1}^{n}\mathrm{Tr}_{\mathcal E}(v_i)\right)^2$.

\end{enumerate}
\end{proposition}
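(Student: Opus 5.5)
The plan is to reduce both parts to Proposition~\ref{first prop}, by expanding the definition $\eta_i=\xi_i^{\mathcal L}-\overline{\mathrm{Tr}}_{\mathcal E}(G)$ and substituting the trace identities already proved there. Throughout, write $T=\sum_{j=1}^{n}\mathrm{Tr}_{\mathcal E}(v_j)$, so that $\overline{\mathrm{Tr}}_{\mathcal E}(G)=T/n$.

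For (i), sum the definition over $i$:
\[
\sum_{i=1}^{n}\eta_i=\sum_{i=1}^{n}\xi_i^{\mathcal L}-n\cdot\frac{T}{n}.
\]
By Proposition~\ref{first prop}(i), $\sum_{i}\xi_i^{\mathcal L}=T$, since this is the trace of $\mathcal{E}^{\mathcal L}(G)$, whose diagonal is the $\mathcal{E}$-transmissions (the diagonal of $\mathcal{E}(G)$ is zero). The two terms therefore cancel and the sum is $0$.

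For (ii), square and sum:
\[
\sum_{i=1}^{n}\eta_i^2=\sum_{i=1}^{n}(\xi_i^{\mathcal L})^2-\frac{2T}{n}\sum_{i=1}^{n}\xi_i^{\mathcal L}+n\frac{T^2}{n^2}.
\]
Using $\sum_i\xi_i^{\mathcal L}=T$ again, the last two terms combine to $-T^2/n$. Then apply Proposition~\ref{first prop}(iii) to the first term, which gives
\[
\sum_{i=1}^{n}\eta_i^2=2e+\sum_{i=1}^{n}\mathrm{Tr}_{\mathcal E}(v_i)^2-\frac{T^2}{n}.
\]
This is exactly $2E$ with $E$ as stated, after factoring out $2$.

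There is no real obstacle; the only care needed is bookkeeping. Specifically, check that the cross term in Proposition~\ref{first prop}(iii) indeed vanishes because $\mathcal{E}(G)$ has zero diagonal, so that $\operatorname{tr}(\operatorname{Diag}\cdot\mathcal{E})=0$. Also keep the factor $\frac{1}{2n}$ consistent when converting to $2E$.
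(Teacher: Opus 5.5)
Your proof is correct and is essentially the paper's argument: you expand $\eta_i=\xi_i^{\mathcal L}-\overline{\mathrm{Tr}}_{\mathcal E}(G)$, use $\sum_i\xi_i^{\mathcal L}=T$ to collapse the linear and constant terms to $-T^2/n$, and substitute Proposition~\ref{first prop}(iii). Your remark that the cross term $\operatorname{tr}\big(\operatorname{Diag}(\mathrm{Tr}_{\mathcal E}(G))\,\mathcal{E}(G)\big)$ vanishes because $\mathcal{E}(G)$ has zero diagonal makes explicit a step the paper leaves unstated.
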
   
\begin{proof}
\begin{enumerate}
    \item[(i)] 
$\sum_{i=1}^{n}\eta_i
=\sum_{i=1}^{n}\left(\xi_i^{\mathcal L}-\overline{\mathrm{Tr}}_{\mathcal E}(G)\right)
=\sum_{i=1}^{n}\xi_i^{\mathcal L}-n\,\overline{\mathrm{Tr}}_{\mathcal E}(G)
=0.$ 
\item[(ii)] By definition, $\eta_i=\xi_i^{\mathcal L}-\overline{\mathrm{Tr}}_{\mathcal E}(G)$ for $i=1,2,\ldots,n$. Hence,
\[
\sum_{i=1}^{n}\eta_i^2
=\sum_{i=1}^{n}\big(\xi_i^{\mathcal L}-\overline{\mathrm{Tr}}_{\mathcal E}(G)\big)^2.
\]
Expanding the square, we obtain
\[
\sum_{i=1}^{n}\eta_i^2
=\sum_{i=1}^{n}\left[(\xi_i^{\mathcal L})^2+(\overline{\mathrm{Tr}}_{\mathcal E}(G))^2
-2\xi_i^{\mathcal L}\overline{\mathrm{Tr}}_{\mathcal E}(G)\right].
\]
Thus,
\[
\sum_{i=1}^{n}\eta_i^2
=\sum_{i=1}^{n}(\xi_i^{\mathcal L})^2
+n\big(\overline{\mathrm{Tr}}_{\mathcal E}(G)\big)^2
-2\overline{\mathrm{Tr}}_{\mathcal E}(G)\sum_{i=1}^{n}\xi_i^{\mathcal L}.
\]
Since $\sum_{i=1}^{n}\xi_i^{\mathcal L}=n\,\overline{\mathrm{Tr}}_{\mathcal E}(G)$, it follows that
\[
\sum_{i=1}^{n}\eta_i^2
=\sum_{i=1}^{n}(\xi_i^{\mathcal L})^2
-n\big(\overline{\mathrm{Tr}}_{\mathcal E}(G)\big)^2.
\]
Using the previously obtained relation
\[
\sum_{i=1}^{n}(\xi_i^{\mathcal L})^2
=\sum_{i=1}^{n}(\mathrm{Tr}_{\mathcal E}(v_i))^2+2e,
\]
we obtain
$\sum_{i=1}^{n}\eta_i^2
=\sum_{i=1}^{n}(\mathrm{Tr}_{\mathcal E}(v_i))^2
+2e-\frac{1}{n}\left(\sum_{i=1}^{n}\mathrm{Tr}_{\mathcal E}(v_i)\right)^2
=2E.$\qedhere
\end{enumerate}
\end{proof}
Now, we investigate spectral bounds for the eccentricity signless Laplacian matrix of a connected graph. In particular, we establish sharp inequalities for its largest eigenvalue in terms of the minimum, average, and maximum eccentric transmission parameters. 
 \begin{lemma}[Gershgorin Theorem]
 Let \( A = (a_{ij}) \) be a complex \( n \times n \) matrix, and let $\{\lambda_1, \lambda_2, \dots, \lambda_n\}$ denote its eigenvalues. Then
 $\lambda_p \in \smashoperator{\bigcup_{i=1}^{n}} \left\{ z : |z - a_{ii}| \leq \sum_{j \neq i} |a_{ij}| \right\},1\leq p \leq n.$
 \end{lemma}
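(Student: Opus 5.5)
We prove the Gershgorin Theorem directly: take an eigenpair and look at the coordinate where the eigenvector has largest modulus. Let $\lambda_p$ be an eigenvalue of $A$ and let $\mathbf{x}=(x_1,\dots,x_n)^T\neq \mathbf{0}$ be an associated eigenvector, so that $A\mathbf{x}=\lambda_p\mathbf{x}$. Choose an index $i$ with $|x_i|=\max_{1\le k\le n}|x_k|$. Since $\mathbf{x}\neq\mathbf{0}$, we have $|x_i|>0$.

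Next, read off the $i$-th row of the eigen-equation:
\[
\sum_{j=1}^{n}a_{ij}x_j=\lambda_p x_i .
\]
Move the diagonal term to the right-hand side to get
\[
(\lambda_p-a_{ii})x_i=\sum_{j\neq i}a_{ij}x_j .
\]
Now take absolute values and apply the triangle inequality. Using $|x_j|\le |x_i|$ for every $j$, this gives
\[
|\lambda_p-a_{ii}|\,|x_i|\le \sum_{j\neq i}|a_{ij}|\,|x_j|\le |x_i|\sum_{j\neq i}|a_{ij}| .
\]
Dividing by $|x_i|>0$ yields $|\lambda_p-a_{ii}|\le\sum_{j\neq i}|a_{ij}|$. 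Hence $\lambda_p$ lies in the $i$-th disc, and therefore in the union of all $n$ discs. Since $p$ was arbitrary, this proves the claim for every $1\le p\le n$.

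The argument has no real obstacle. The only point needing care is choosing $i$ as an index where $|x_i|$ is maximal; this choice is what allows the division and the comparison $|x_j|\le|x_i|$. Note that the row $i$, and hence the disc, depends on the eigenvector chosen for $\lambda_p$, so the conclusion is membership in the union of the discs, not in a prescribed one.

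In the sequel this will be applied to $\mathcal{E}^{\mathcal Q}(G)$. There $a_{ii}=\mathrm{Tr}_{\mathcal E}(v_i)$ and $\sum_{j\ne i}|a_{ij}|=\mathrm{Tr}_{\mathcal E}(v_i)$, so every eigenvalue lies in $[0,2\mathrm{Tr}_{\mathcal E}^{\max}]$.
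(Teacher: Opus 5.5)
Your proof is correct. The paper gives no proof of this lemma; it quotes Gershgorin's theorem as a standard tool. Your argument is the classical one: pick the eigenvector coordinate of maximal modulus, isolate the diagonal term in that row, and apply the triangle inequality. Your closing remark, that every eigenvalue of the real symmetric matrix $\mathcal{E}^{\mathcal Q}(G)$ lies in $[0,2\mathrm{Tr}_{\mathcal E}^{\max}]$, is exactly how the paper uses the lemma in Theorem~\ref{bound theorerm}.
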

 \begin{theorem}\label{bound theorerm}
 	Let $G$ be a connected graph. Then $
 	2\mathrm{Tr}_\mathcal{E}^{\min}\leq 2\overline{\mathrm{Tr}}_\mathcal{E} \leq \xi_1^\mathcal{Q} \leq 2\mathrm{Tr}_\mathcal{E}^{\max}.$
 \end{theorem}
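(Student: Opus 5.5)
The chain has three inequalities. The first, $2\mathrm{Tr}_\mathcal{E}^{\min}\leq 2\overline{\mathrm{Tr}}_\mathcal{E}$, is immediate: an average of the values $\mathrm{Tr}_\mathcal{E}(v_i)$ is at least their minimum. For the other two I will use the Rayleigh quotient and the Gershgorin Theorem stated above, applied to the real symmetric nonnegative matrix $\mathcal{E}^\mathcal{Q}(G)$.

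For the lower bound $2\overline{\mathrm{Tr}}_\mathcal{E}\leq \xi_1^\mathcal{Q}$, note that $\mathcal{E}^\mathcal{Q}(G)$ is symmetric, so its largest eigenvalue satisfies
\[
\xi_1^\mathcal{Q}=\max_{\mathbf{x}\neq \mathbf{0}}\frac{\mathbf{x}^T\mathcal{E}^\mathcal{Q}(G)\mathbf{x}}{\mathbf{x}^T\mathbf{x}}.
\]
I test with $\mathbf{x}=\mathbf{1}$. Then $\mathbf{1}^T\operatorname{Diag}(\mathrm{Tr}_\mathcal{E}(G))\mathbf{1}=\sum_i \mathrm{Tr}_\mathcal{E}(v_i)$. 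Also $\mathbf{1}^T\mathcal{E}(G)\mathbf{1}=\sum_{i,j}\mathcal{E}_{ij}(G)=\sum_i\mathrm{Tr}_\mathcal{E}(v_i)$, since by definition $\mathrm{Tr}_\mathcal{E}(v_i)$ is the $i$-th row sum. The quotient is therefore $\frac{2}{n}\sum_i\mathrm{Tr}_\mathcal{E}(v_i)=2\overline{\mathrm{Tr}}_\mathcal{E}$, which gives the bound.

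For the upper bound $\xi_1^\mathcal{Q}\leq 2\mathrm{Tr}_\mathcal{E}^{\max}$, I apply the Gershgorin Theorem to $\mathcal{E}^\mathcal{Q}(G)$. The diagonal entries of $\mathcal{E}(G)$ are zero, because $d(v_i,v_i)=0$. Hence the $i$-th diagonal entry of $\mathcal{E}^\mathcal{Q}(G)$ is $\mathrm{Tr}_\mathcal{E}(v_i)$. The off-diagonal absolute row sum is $\sum_{j\neq i}\mathcal{E}_{ij}(G)=\mathrm{Tr}_\mathcal{E}(v_i)$, since all entries are nonnegative. Every eigenvalue therefore lies in some disc $|z-\mathrm{Tr}_\mathcal{E}(v_i)|\leq \mathrm{Tr}_\mathcal{E}(v_i)$. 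As $\xi_1^\mathcal{Q}$ is real, this gives $\xi_1^\mathcal{Q}\leq 2\mathrm{Tr}_\mathcal{E}(v_i)\leq 2\mathrm{Tr}_\mathcal{E}^{\max}$.

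None of these steps is a real obstacle. The only point that needs explicit checking is that $\mathcal{E}(G)$ has zero diagonal and nonnegative entries, so that the Gershgorin radius equals the $\mathcal{E}$-transmission exactly. For sharpness, I would remark that if $G$ is $\mathcal{E}$-transmission regular, all three bounds coincide at $2k$, with $\mathbf{1}$ an eigenvector of $\mathcal{E}^\mathcal{Q}(G)$ for the eigenvalue $2k$.
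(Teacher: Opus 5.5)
Your proof is correct and follows the paper's argument: evaluate the Rayleigh quotient of $\mathcal{E}^\mathcal{Q}(G)$ at $\mathbf{1}$ for the lower bound, and apply Gershgorin's theorem for the upper bound. You also make explicit what the paper leaves implicit, namely that $\mathcal{E}(G)$ has zero diagonal and nonnegative entries, so each Gershgorin disc is exactly $|z-\mathrm{Tr}_\mathcal{E}(v_i)|\leq \mathrm{Tr}_\mathcal{E}(v_i)$.
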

 \begin{proof}
 	Using Rayleigh's quotient, we have	
 	$\xi_1^\mathcal{Q}(G)=\smashoperator{\max_{\mathbf{x}\neq 0}}\frac{\mathbf{x}^T\mathcal{E^{Q}}(G)\mathbf{x}}{\mathbf{x}^T\mathbf{x}}.$ If we take $\mathbf{x}=\mathbf{1}$, then we get $\xi_1^\mathcal{Q}(G)\geq2\overline{\mathrm{Tr}}_\mathcal{E}\geq2\mathrm{Tr}_\mathcal{E}^{\min}$. Also, by Gershgorin theorem the least upper bound for $\xi_1^\mathcal{Q}(G)$ is  $2\mathrm{Tr}_\mathcal{E}^{\max}$. Moreover, equality holds if and only if $G$ is $\mathcal{E}$-transmission regular graph i.e. $\mathbf{x}=\mathbf{1}$ is an eigenvector of $\mathcal{E}(G)$. 
 \end{proof}
 It is known that the eccentricity matrix of a connected graph may be either reducible or irreducible. By applying [Theorems~5.1 and~5.2 from~\cite{Minc H.}], we established that if $\mathcal{E}(G)$ is irreducible, then $\mathrm{Tr}_{\mathcal{E}}^{\max}(G) < \xi_1^\mathcal{Q} \leq 2\,\mathrm{Tr}_{\mathcal{E}}^{\max}(G)$, and if $\mathcal{E}(G)$ is reducible, then $\mathrm{Tr}_{\mathcal{E}}^{\max}(G) \leq  \xi_1^\mathcal{Q} \leq 2\,\mathrm{Tr}_{\mathcal{E}}^{\max}(G)$.
From the eccentric Laplacian spectrum and the eccentric signless Laplacian spectrum of a connected graph \( G \), one can determine the number of vertices of \( G \), since it equals the number of eigenvalues. The \(\mathcal{E}\)-Wiener index of \( G \) can be obtained as half of the sum of the eigenvalues of \(\mathcal{E^L}(G)\) (or \(\mathcal{E^Q}(G)\)). Using \ref{bound theorerm} and the fact that the sum of the eccentric signless Laplacian eigenvalues of $G$ is exactly the sum of the $\mathcal{E}$-transmission in $G$, one can conclude that $G$ is an $\mathcal{E}$-transmission regular graph if and only if the largest eigenvalue of $\mathcal{E^Q}(G)$ is equal to $\frac{2}{n}$ trace $(\mathcal{E^Q}(G))$.

Let $G$ be a connected graph. The characteristic polynomials of 
$\mathcal{E}(G)$, $\mathcal{E}^{\mathcal L}(G)$, and $\mathcal{E}^{\mathcal Q}(G)$ 
are denoted by $P_{\mathcal E}^{G}(t)$, $P_{\mathcal E^{\mathcal L}}^{G}(t)$, 
and $P_{\mathcal E^{\mathcal Q}}^{G}(t)$, respectively. Now we evaluate the characteristic polynomials $P_\mathcal{E}^G(t)$, $ P_\mathcal{E^L}^G(t)$ and $P_\mathcal{E^Q}^G(t)$ for some particular graphs.
		\begin{example}[The complete graph]	
	 Since $\mathcal{E}(K_n)=\mathcal{A}(K_n)$, the eccentric, the eccentric Laplacian, and the eccentric signless Laplacian spectra of the complete graph $K_n$ are, respectively, its adjacency, Laplacian, and signless Laplacian spectra, i.e.,
\[
\begin{aligned}
P_{\mathcal{E}}^{K_n}(t) &= (t-n+1)(t+1)^{n-1}; \\
P_{\mathcal{E}^{L}}^{K_n}(t) &= t(t-n)^{n-1}; \\
P_{\mathcal{E}^{Q}}^{K_n}(t) &= (t-2n+2)(t-n+2)^{n-1}.
\end{aligned}
\]
\end{example}
	\begin{example}[The complement of an edge]  
	 The complement of an edge is a graph obtained by removing an edge from a complete graph, i.e., $G=K_{n}-e$. Since $\mathcal{E}(K_n-e)=\mathcal{D}(K_n-e)$, by \cite{Aouchiche} we have the followings: 
    \[
\begin{aligned}
P_{\mathcal{E}}^{K_n-e}(t) 
&= \Bigl(t-\frac{n-1+\sqrt{(n-1)^2+8}}{2}\Bigr)
   \Bigl(t-\frac{n-1-\sqrt{(n-1)^2+8}}{2}\Bigr)
   (t+2)(t+1)^{n-3}; \\[4pt]
P_{\mathcal{E}^{L}}^{K_n-e}(t) 
&= t(t-n-2)(t-n)^{n-2}; \\[4pt]
P_{\mathcal{E}^{Q}}^{K_n-e}(t) 
&= \Bigl(t-\frac{3n-2+\sqrt{n^2-4n+20}}{2}\Bigr)
   \Bigl(t-\frac{3n-2-\sqrt{n^2-4n+20}}{2}\Bigr)
   (t-n+2)^{n-2}.
\end{aligned}
\]
	\end{example}
	\begin{example}[The complete bipartite graph]  
	The eccentric, the Laplacian eccentric, and the signless Laplacian characteristic polynomials of the complete bipartite graph $K_{m,n}$, $m,n\neq1$, are respectively,
\[
\begin{aligned}
P_{\mathcal{E}}^{K_{m,n}}(t) 
&= (t-2m+2)(t-2n+2)(t+2)^{m+n-2}; \\[4pt]
P_{\mathcal{E}^{L}}^{K_{m,n}}(t) 
&= t^2 (t-2m)^{m-1}(t-2n)^{n-1}; \\[4pt]
P_{\mathcal{E}^{Q}}^{K_{m,n}}(t) 
&= (t-4m+4)(t-4n+4)(t-2m+4)^{m-1}(t-2n+4)^{n-1}.
\end{aligned}
\]    
	\end{example} 
    \begin{example}[Cycle Graph]
Let $C_{2k}$ be a cycle of even order. Then
\[
\begin{aligned}
P_{\mathcal{E}}^{C_{2k}}(t) 
&= (t^2 - k^2)^k; \\
P_{\mathcal{E}^{L}}^{C_{2k}}(t) 
&= t^k (t - 2k)^k; \\
P_{\mathcal{E}^{Q}}^{C_{2k}}(t) 
&= t^k (t - 2k)^k.
\end{aligned}
\]

Now, let $C_{2k+1}$ be a cycle of odd order. Then
\[
\begin{aligned}
P_{\mathcal{E}}^{C_{2k+1}}(t) 
&= \prod_{i=1}^{2k+1}
\left(t - 2k \cos \frac{2\pi i}{2k+1}\right); \\
P_{\mathcal{E}^{L}}^{C_{2k+1}}(t) 
&= \prod_{i=1}^{2k+1}
\left(t - 2k + 2k \cos \frac{2\pi i}{2k+1}\right); \\
P_{\mathcal{E}^{Q}}^{C_{2k+1}}(t) 
&= \prod_{i=1}^{2k+1}
\left(t - 2k - 2k \cos \frac{2\pi i}{2k+1}\right).
\end{aligned}
\]
\end{example}
	\section{Equivalence with other Spectra}
 In this section, we establish the relationships among the spectra of various eccentricity matrices associated with graphs. Cvetkovi\'c and Simi\'c \cite{CvetkovicI, CvetkovicII, CvetkovicIII} made significant contributions to the study of spectral graph theory involving the signless Laplacian matrix. They discovered relationships among the adjacency, Laplacian, and signless Laplacian spectra for a regular graph and also demonstrated equivalence between the Laplacian and signless Laplacian spectra for bipartite graphs. Then M. Aouchiche and P. Hansen \cite{Aouchiche} extended the work for the distance Laplacian matrix. Our first finding is the equivalence between the spectrum of the eccentricity matrix $\mathcal{E}(G)$, the eccentricity Laplacian $\mathcal{E^L}(G)$, and the eccentricity signless Laplacian $\mathcal{E^Q}(G)$ on the set of $\mathcal{E}$-transmission regular graphs.
 
	 Let $G$ be an $\mathcal{E}$-transmission regular graph with $\mathcal{E}$-transmission degree $k$. Let $\{k=\xi_1\geq\xi_2\geq\dots\geq\xi_n\}$ be the eccentric spectrum of $G$; then $\{k-\xi_n\geq k-\xi_{n-1}\geq\dots\geq k-\xi_1=0\}=\{\xi_1^\mathcal{L}\geq\xi_2^\mathcal{L}\geq\dots\geq\xi_n^\mathcal{L}\}$ is the eccentric Laplacian spectrum of $G$, and $\{2k\geq k+\xi_2\geq\dots\geq k+\xi_n\}=\{\xi_1^\mathcal{Q}\geq\xi_2^\mathcal{Q}\geq\dots\geq\xi_n^\mathcal{Q}\}$ is the eccentric signless Laplacian spectrum of $G$. For example, see the following Petersen graph and its various spectra. The Petersen graph is $\mathcal{E}$-transmission regular graph with $\mathcal{E}$-transmission degree $12$.
\begin{center}
	\begin{minipage}{0.48\textwidth}
		\centering
		\begin{tikzpicture}[scale=1.5, every node/.style={circle,draw,fill=black,inner sep=1.5pt}]
			\node (1) at (90:1) {};
			\node (2) at (162:1) {};
			\node (3) at (234:1) {};
			\node (4) at (306:1) {};
			\node (5) at (18:1) {};
			
			\node (6) at (90:0.4) {};                        
			\node (7) at (162:0.4) {};
			\node (8) at (234:0.4) {};
			\node (9) at (306:0.4) {};
			\node (10) at (18:0.4) {};
			
			\draw (1) -- (2) -- (3) -- (4) -- (5) -- (1);
			
			\draw (6) -- (8) -- (10) -- (7) -- (9) -- (6);
			
			\draw (1) -- (6);
			\draw (2) -- (7);
			\draw (3) -- (8);
			\draw (4) -- (9);
			\draw (5) -- (10);
		\end{tikzpicture}
		
		\vspace{0.0000001cm}
		 {\small \textbf{Figure 1.} Petersen graph}
	\end{minipage}
	\hspace{0.1pt}
	\begin{minipage}{0.48\textwidth}
		\centering
		\begin{tabular}{|c|c|}
			\hline
			\textbf{$\mathcal{E}$-spectrum} & $12^{(1)} \quad 2^{(4)} \quad -4^{(5)}$ \\ \hline
			\textbf{$\mathcal{E^L}$-spectrum} & $16^{(5)} \quad 10^{(4)} \quad 0^{(1)}$ \\ \hline
			\textbf{$\mathcal{E^Q}$-spectrum} & $24^{(1)} \quad 14^{(4)} \quad 8^{(5)}$ \\ \hline
		\end{tabular}
		
		\vspace{0.3cm}
		 {\small \textbf{Table 1.} Different spectra of the Petersen graph}
	\end{minipage}
\end{center}

Recall that the eccentric \cite{Wang2019} (or $\mathcal{E}$-energy) of a graph is defined as $\sum_{i=1}^{n}|\xi_i|$,
where $\{\xi_1,\xi_2,\ldots,\xi_n\}$ are the eigenvalues of the eccentricity matrix. Similarly, we define the eccentric Laplacian energy (or $\mathcal{E}^{\mathcal L}$-energy) as
\[
\sum_{i=1}^{n}|\eta_i|,
\]	
where $\{\eta_1,\eta_2,\ldots,\eta_n\}$ are the auxiliary eigenvalues of the eccentric Laplacian matrix.

 In the next result, we show that for $\mathcal{E}$-transmission regular graphs, the eccentric energy and the eccentric Laplacian energy are equal.\begin{theorem}Let $G$ be a $\mathcal{E}$-transmission regular graph with $\mathcal{E}$-transmission degree $k$. Then the energy of $\mathcal{E}(G)$ and of $\mathcal{E^L}(G)$ are equal.
 \end{theorem}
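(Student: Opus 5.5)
The plan is to compute the auxiliary eigenvalues $\eta_i$ explicitly in terms of the eigenvalues $\xi_i$ of $\mathcal{E}(G)$. The energy of $\mathcal{E}^{\mathcal L}(G)$ is then seen to be a permutation of the absolute values $|\xi_i|$.

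\textbf{Step 1: the average transmission.} Since $G$ is $\mathcal{E}$-transmission regular of degree $k$, every row sum of $\mathcal{E}(G)$ equals $k$. Hence $\operatorname{Diag}(\mathrm{Tr}_{\mathcal E}(G))=kI$, and therefore
\[
\overline{\mathrm{Tr}}_{\mathcal E}(G)=\frac{1}{n}\sum_{j=1}^{n}\mathrm{Tr}_{\mathcal E}(v_j)=k .
\]

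\textbf{Step 2: the spectrum of $\mathcal{E}^{\mathcal L}(G)$.} We have $\mathcal{E}^{\mathcal L}(G)=kI-\mathcal{E}(G)$. The matrix $\mathcal{E}(G)$ is real symmetric, and $kI$ commutes with it. So any orthonormal eigenbasis of $\mathcal{E}(G)$ is also an eigenbasis of $\mathcal{E}^{\mathcal L}(G)$. This gives the multiset identity
\[
\sigma(\mathcal{E}^{\mathcal L}(G))=\{k-\xi_i : 1\le i\le n\},
\]
which is exactly the correspondence recorded in the discussion preceding the theorem. Only this multiset equality is needed; the particular ordering $\xi_i^{\mathcal L}=k-\xi_{n+1-i}$ plays no role.

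\textbf{Step 3: the auxiliary eigenvalues and the energies.} By definition, the auxiliary eigenvalues are
\[
\eta_i=\xi_i^{\mathcal L}-\overline{\mathrm{Tr}}_{\mathcal E}(G)=\xi_i^{\mathcal L}-k .
\]
As a multiset, $\{\eta_i\}$ is therefore $\{(k-\xi_j)-k\}=\{-\xi_j\}$. It follows that
\[
\sum_{i=1}^{n}|\eta_i|=\sum_{j=1}^{n}|-\xi_j|=\sum_{j=1}^{n}|\xi_j| .
\]
The left side is the $\mathcal{E}^{\mathcal L}$-energy and the right side is the $\mathcal{E}$-energy, so the two are equal.

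There is no genuine obstacle here. The only point requiring care is Step 3: the index $i$ in $\eta_i$ refers to the ordering of the $\xi_i^{\mathcal L}$, not of the $\xi_i$. The argument therefore has to be phrased as an equality of multisets, which the sum of absolute values does not depend on.
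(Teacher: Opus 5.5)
Your proof is correct and follows the paper's argument: both use $\overline{\mathrm{Tr}}_{\mathcal E}(G)=k$ and $\mathcal{E}^{\mathcal L}(G)=kI-\mathcal{E}(G)$ to identify the auxiliary eigenvalues with the eigenvalues of $\mathcal{E}(G)$ up to sign and reordering. You also get the sign right (the $\eta_i$ are the $-\xi_j$), whereas the paper writes $\eta_i=\xi_{n+1-i}$; this slip is harmless, since only absolute values enter the energy.
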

	\begin{proof}
Since $\mathrm{Tr}_{\mathcal E}(v_i)=k$ for all $1\le i\le n$, we have
\[
k=\frac{1}{n}\sum_{j=1}^{n}\mathrm{Tr}_{\mathcal E}(v_j).
\]
The auxiliary eigenvalues of $\mathcal{E}^{\mathcal L}(G)$ are given by
\[
\eta_i=\xi_i^{\mathcal L}-\frac{1}{n}\sum_{j=1}^{n}\mathrm{Tr}_{\mathcal E}(v_j),
\qquad 1\le i\le n.
\]
Substituting the value of the average trace, we obtain
\[
\eta_i=\xi_i^{\mathcal L}-k.
\]
Since $\xi_i^{\mathcal L}=k-\xi_{n+1-i}$ for $1\le i\le n$, it follows that
$
\eta_i=\xi_{n+1-i}.
$
Hence,
\[
\sum_{i=1}^{n}|\eta_i|
=\sum_{i=1}^{n}|\xi_{n+1-i}|
 =\sum_{i=1}^{n}|\xi_i|. \qedhere
\]
\end{proof}
	 
	Next, we find equivalence between Laplacian spectra and eccentric Laplacian spectra (signless Laplacian spectra and eccentric signless Laplacian spectra) for $G\vee K_1$, where $G$ is a regular (adjacency) graph of diameter $2$. But first, we recall the following results.
\begin{lemma}\cite{Wang2018}
	 Let $G$ be an $r$-regular (adjacency) graph with diameter $2$, and let $\{r=\lambda_1,\lambda_2,\dots,\lambda_n\}$ be the adjacency spectrum of $G$. Then $\mathcal{E}(G\vee K_1)=\begin{bmatrix}2\mathcal{A}(\bar{G}) &  \mathbf{1}\\\mathbf{1}^T & 0\end{bmatrix}$ and $\{(n-r-1)\pm\sqrt{(n-r-1)^2+1},-2(\lambda_2+1),\dots,-2(\lambda_n+1)\}$ is the eccentric spectrum of $G\vee K_1$.
\end{lemma}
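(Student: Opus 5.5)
The plan has two parts: first establish the block form of $\mathcal{E}(G\vee K_1)$ from the distances, then diagonalize that block matrix using the regularity of $G$. Write $H=G\vee K_1$ and let $w$ be the added vertex.

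For the block form, I will compute eccentricities. Since $w$ is universal, $e(w)=1$. For $v\in V(G)$, $e(v)=2$ unless $v$ is universal in $G$. This is excluded: $G$ is $r$-regular with diameter $2$, so $r<n-1$. Hence $d_H(u,v)=1$ if $uv\in E(G)$ and $d_H(u,v)=2$ otherwise, for distinct $u,v\in V(G)$. Applying the definition of $\mathcal{E}$, the entry for a pair $u,v\in V(G)$ is nonzero exactly when $d(u,v)=\min\{e(u),e(v)\}=2$. This happens precisely when $uv\in E(\bar G)$, giving the block $2\mathcal{A}(\bar G)$. For a pair $(v,w)$ we have $d(v,w)=1=\min\{e(v),e(w)\}$, giving the border of ones, and the $(w,w)$ entry is $0$. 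This yields the stated matrix.

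For the spectrum, I will use that $\bar G$ is $(n-r-1)$-regular, so $\mathcal{A}(\bar G)=J-I-\mathcal{A}(G)$. Then $\mathcal{A}(G)$ and $\mathcal{A}(\bar G)$ share an orthogonal eigenbasis $\mathbf{1},\mathbf{x}_2,\dots,\mathbf{x}_n$ with $\mathbf{x}_i\perp\mathbf{1}$. For $i\ge 2$, the vector $(\mathbf{x}_i,0)^T$ is an eigenvector of $\mathcal{E}(H)$. Indeed, the last coordinate of its image is $\mathbf{1}^T\mathbf{x}_i=0$, and the top block gives $2\mathcal{A}(\bar G)\mathbf{x}_i=2(-1-\lambda_i)\mathbf{x}_i$. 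This produces the $n-1$ eigenvalues $-2(\lambda_i+1)$.

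The two remaining eigenvalues live on the invariant subspace spanned by $(\mathbf{1},0)^T$ and $(\mathbf{0},1)^T$. I will obtain them as the roots of the characteristic polynomial of the $2\times 2$ quotient (equitable partition) matrix. For the partition $\{V(G),\{w\}\}$, its first row records the row sum $2(n-r-1)$ of $2\mathcal{A}(\bar G)$ and the single $1$ to $w$. Its second row records the $n$ ones from $w$ into $V(G)$ and the $0$ on the diagonal. Solving this quadratic gives the pair centered at $n-r-1$, matching the $(n-r-1)\pm\sqrt{\,\cdot\,}$ form of the statement.

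The main obstacle is this last step: pinning down the constant under the square root. The quotient matrix is not symmetric; its off-diagonal entries are $1$ and $n$. The discriminant therefore depends on the product of these entries rather than on either one alone, and this is exactly where the statement's constant must be confirmed. I will cross-check the value using Proposition~\ref{first prop}(i) and (ii). The trace of $\mathcal{E}(H)$ must vanish. The sum of squares of the eigenvalues must equal $\operatorname{tr}(\mathcal{E}(H)^2)=4n(n-r-1)+2n$, where the last term comes from the border of ones. Comparing this against $\sum_{i\ge2}4(\lambda_i+1)^2$ plus the squares of the two quotient roots fixes the constant unambiguously. I will present the final pair according to this check.
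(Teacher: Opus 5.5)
Your block-form computation and the $n-1$ eigenvalues $-2(\lambda_i+1)$ from the vectors $(\mathbf{x}_i,0)^T$ are correct. Reducing the rest to the invariant subspace spanned by $(\mathbf{1},0)^T,(\mathbf{0},1)^T$ is also the right move. The paper quotes this lemma from Wang et al.\ without proof. For the analogous $\mathcal{E^L}$ and $\mathcal{E^Q}$ computations on $G\vee K_1$ it uses the Schur complement with Lemma~\ref{Coronal}, which is the same $2\times 2$ reduction as yours.

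The trouble is the step you leave open: once carried out, it refutes the statement as printed. The quotient matrix $\begin{bmatrix}2(n-r-1)&1\\ n&0\end{bmatrix}$ has characteristic polynomial $t^2-2(n-r-1)t-n$, so the remaining pair is
\[
(n-r-1)\pm\sqrt{(n-r-1)^2+n},
\]
not $(n-r-1)\pm\sqrt{(n-r-1)^2+1}$. Your own check agrees. Using $\sum_{i\ge2}\lambda_i=-r$ and $\sum_{i\ge2}\lambda_i^2=nr-r^2$, you get $\sum_{i\ge2}4(\lambda_i+1)^2=4(r+1)(n-r-1)$. Matching $\operatorname{tr}(\mathcal{E}(G\vee K_1)^2)=4n(n-r-1)+2n$ then forces $t_1t_2=-n$. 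Concretely, for $G=C_4$ the spectrum is $\{1\pm\sqrt5,\,2,\,-2,\,-2\}$, whose squares sum to $24=\operatorname{tr}(\mathcal{E}^2)$; the printed $1\pm\sqrt2$ would give $18$.

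So the lemma contains a misprint: $+1$ should be $+n$, and no argument can prove it literally. The rest of the paper is consistent with $+n$:
\begin{itemize}
\item The Remark asserts that for $r=\frac{n-1}{2}$ the graph $G\vee K_1$ is $\mathcal{E}$-transmission regular of degree $n$. This forces $\rho(\mathcal{E}(G\vee K_1))=n$, and indeed $\frac{n-1}{2}+\sqrt{\frac{(n-1)^2}{4}+n}=n$, whereas the $+1$ version fails.
\item The constant term $4n(n-r-1)$ in the quadratic for $\mathcal{E^Q}(G\vee K_1)$ arises from the same off-diagonal product $1\cdot n$.
\end{itemize}

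Your closing sentence ``I will present the final pair according to this check'' is where the proposal falls short. You should state explicitly that you are proving the corrected statement, with the value you derived. You should also add one line: since $\mathcal{E}(G\vee K_1)$ is symmetric, the orthogonal complement of $\operatorname{span}\{(\mathbf{x}_i,0)^T\}$ is exactly the span of $(\mathbf{1},0)^T,(\mathbf{0},1)^T$, so the two quotient roots exhaust the spectrum. With those two additions your argument is complete.
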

\noindent 

	\begin{lemma}\label{Coronal}\cite{Wang2018}
		  Let $A$ be an $n\times n$ order matrix, and let each column sum of $A$ be constant (say $\alpha$). Then $\mathbf{1}^T(tI-A)^{-1}\mathbf{1}=\frac{n}{t-\alpha}$.
	\end{lemma}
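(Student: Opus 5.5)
The statement to prove is Lemma~\ref{Coronal}: if every column sum of the $n\times n$ matrix $A$ equals $\alpha$, then $\mathbf{1}^T(tI-A)^{-1}\mathbf{1}=\frac{n}{t-\alpha}$. The plan is to treat $\mathbf{1}^T$ as a left eigenvector of $A$ and transfer that eigen-relation to the resolvent.

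First I restate the column-sum hypothesis in matrix form. Saying that each column sum is $\alpha$ is the same as $\mathbf{1}^T A=\alpha\mathbf{1}^T$. Hence
\[
\mathbf{1}^T(tI-A)=t\mathbf{1}^T-\alpha\mathbf{1}^T=(t-\alpha)\mathbf{1}^T .
\]

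Next I restrict to those $t$ for which both sides of the claimed identity make sense. These are the $t$ with $t\neq\alpha$ and $t$ not an eigenvalue of $A$, so that $tI-A$ is invertible. Since $\alpha$ is itself an eigenvalue of $A$ (by the relation above), this amounts to requiring that $t$ not be an eigenvalue of $A$. For such $t$, I multiply the displayed identity on the right by $(tI-A)^{-1}$ and divide by $t-\alpha$. This gives
\[
\mathbf{1}^T(tI-A)^{-1}=\frac{1}{t-\alpha}\mathbf{1}^T .
\]

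Finally I multiply on the right by $\mathbf{1}$ and use $\mathbf{1}^T\mathbf{1}=n$, which yields $\frac{n}{t-\alpha}$.

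There is no real obstacle here. The only point needing care is to state explicitly that the identity is an equality of rational functions in $t$, valid whenever $t\notin\sigma(A)$. This is exactly how it will be used in computing characteristic polynomials of joins via Schur complements.
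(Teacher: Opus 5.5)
Your proof is correct. You rewrite the hypothesis as $\mathbf{1}^T A=\alpha\mathbf{1}^T$, pass it to the resolvent as $\mathbf{1}^T(tI-A)^{-1}=\frac{1}{t-\alpha}\mathbf{1}^T$ for $t\notin\sigma(A)$, and pair with $\mathbf{1}$; this is the standard argument for this identity. The paper gives no proof of its own and simply quotes the lemma from \cite{Wang2018}, so there is nothing to compare and nothing missing.
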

	\begin{theorem}
 Let $G$ be an $r$-regular (adjacency) graph of diameter $2$. Let $\sigma(\mathcal{L}(G))=\{\mu_1,\mu_2,\dots,\mu_n\}, \sigma(\mathcal{Q}(G))=\{q_1,q_2,\dots,q_n\},$ and let $\{\mathbf{u}_1,\mathbf{u}_2,\dots,\mathbf{u}_n=\mathbf{1}\}$ and $\{\mathbf{1}=\mathbf{q}_1,\mathbf{q}_2,\dots,\mathbf{q}_n\}$ are the corresponding eigenvectors of $\mathcal{L}(G)$ and $\mathcal{Q}(G)$, respectively. Then the following holds.
\begin{enumerate}
\item[(i)]
$
\sigma(\mathcal{E}^{\mathcal L}(G\vee K_1))
=
\{2n+1-2\mu_1,\,2n+1-2\mu_2,\dots,2n+1-2\mu_{n-1},\,0,\,n+1\}.$
Moreover,
\[
\begin{bmatrix}
\mathbf{u}_1\\
0
\end{bmatrix},
\begin{bmatrix}
\mathbf{u}_2\\
0
\end{bmatrix},
\dots,
\begin{bmatrix}
\mathbf{u}_{n-1}\\
0
\end{bmatrix},
\begin{bmatrix}
\mathbf{u}_n\\
1
\end{bmatrix},
\begin{bmatrix}
\mathbf{u}_n\\
-n
\end{bmatrix}
\]
are the corresponding eigenvectors of $\mathcal{E}^{\mathcal L}(G\vee K_1)$.

\item[(ii)]
$\sigma(\mathcal{E}^{\mathcal Q}(G\vee K_1))
=
\{2n-3-q_2,\,2n-3-q_3,\dots,2n-3-q_n,\,t_1,\,t_2\},$ where $t_1$ and $t_2$ are the zeros of the quadratic polynomial $t^2-(5n-4r-3)t+4n(n-r-1).$
Furthermore,
\[
\begin{bmatrix}
\mathbf{q}_2\\
0
\end{bmatrix},\begin{bmatrix}
\mathbf{q}_3\\
0
\end{bmatrix}
\dots,
\begin{bmatrix}
\mathbf{q}_n\\
0
\end{bmatrix},
\begin{bmatrix}
\mathbf{q}_1\\
\alpha_1
\end{bmatrix},
\begin{bmatrix}
\mathbf{q}_1\\
\alpha_2
\end{bmatrix}
\]
are the corresponding eigenvectors, where 
$
\alpha_1=\frac{n}{t_1-n}, \ \alpha_2=-\frac{n}{\alpha_1}.$
\end{enumerate}
\end{theorem}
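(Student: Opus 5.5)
We work directly with the block form of $\mathcal{E}(G\vee K_1)$ from the lemma of \cite{Wang2018}. We rewrite the diagonal block of $\mathcal{E}^{\mathcal L}$ and $\mathcal{E}^{\mathcal Q}$ in terms of $\mathcal{L}(G)$, $\mathcal{Q}(G)$ and $J$, and then check the proposed eigenvectors one by one.

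\textbf{Step 1: transmissions.} Since $G$ is $r$-regular, $\bar G$ is $(n-r-1)$-regular. The row sums of $\begin{bmatrix}2\mathcal{A}(\bar G)&\mathbf{1}\\ \mathbf{1}^T&0\end{bmatrix}$ are therefore $2(n-r-1)+1=2n-2r-1$ for every vertex of $G$, and $n$ for the apex. Hence
\[
\mathcal{E}^{\mathcal L}(G\vee K_1)=\begin{bmatrix}(2n-2r-1)I-2\mathcal{A}(\bar G)&-\mathbf{1}\\-\mathbf{1}^T&n\end{bmatrix},\qquad
\mathcal{E}^{\mathcal Q}(G\vee K_1)=\begin{bmatrix}(2n-2r-1)I+2\mathcal{A}(\bar G)&\mathbf{1}\\ \mathbf{1}^T&n\end{bmatrix}.
\]

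\textbf{Step 2: rewrite the top-left blocks.} Substitute $\mathcal{A}(\bar G)=J-I-\mathcal{A}(G)$.
\begin{itemize}
\item[(a)] Using $\mathcal{A}(G)=rI-\mathcal{L}(G)$, the top-left block of $\mathcal{E}^{\mathcal L}$ becomes $(2n+1)I-2J-2\mathcal{L}(G)$.
\item[(b)] Using $\mathcal{A}(G)=\mathcal{Q}(G)-rI$, the top-left block of $\mathcal{E}^{\mathcal Q}$ becomes $(2n-3)I+2J-2\mathcal{Q}(G)$.
\end{itemize}
Because $G$ is regular, $\mathcal{L}(G)$ and $\mathcal{Q}(G)$ share the eigenvector $\mathbf{1}$. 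The remaining eigenvectors can be chosen orthogonal to $\mathbf{1}$, so $J\mathbf{u}_i=\mathbf{0}$ for $i\le n-1$ and $J\mathbf{q}_i=\mathbf{0}$ for $i\ge 2$. For these vectors, $\begin{bmatrix}\mathbf{u}_i\\0\end{bmatrix}$ and $\begin{bmatrix}\mathbf{q}_i\\0\end{bmatrix}$ are eigenvectors, since $\mathbf{1}^T\mathbf{u}_i=0$ kills the last coordinate. This gives the eigenvalues $2n+1-2\mu_i$ and $(2n-3)-2q_i$. The factor in front of $q_i$ must be tracked carefully against the statement: my computation produces $2q_i$, and I would recheck this coefficient before finalizing.

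\textbf{Step 3: the two-dimensional invariant subspace.} The subspace spanned by $\begin{bmatrix}\mathbf{1}\\0\end{bmatrix}$ and $\begin{bmatrix}\mathbf{0}\\1\end{bmatrix}$ is invariant, so we try $\begin{bmatrix}\mathbf{1}\\ \alpha\end{bmatrix}$.
\begin{itemize}
\item[(a)] For $\mathcal{E}^{\mathcal L}$, the top block acts on $\mathbf{1}$ by $(2n+1)-2n-0=1$. Then $\alpha=1$ gives eigenvalue $0$, and $\alpha=-n$ gives top entry $1+n$ and bottom entry $-n-n^2=-n(n+1)$, i.e.\ eigenvalue $n+1$.
\item[(b)] For $\mathcal{E}^{\mathcal Q}$, with $q_1=2r$, the top equation reads $(4n-4r-3)+\alpha=t$ and the bottom equation reads $n+n\alpha=t\alpha$. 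The bottom equation gives $\alpha=\frac{n}{t-n}$, consistent with Lemma~\ref{Coronal}. Substituting into the top equation gives $(t-4n+4r+3)(t-n)=n$, which expands to $t^2-(5n-4r-3)t+4n(n-r-1)=0$.
\end{itemize}
The relation $\alpha_2=-n/\alpha_1$ follows from orthogonality of eigenvectors of a symmetric matrix belonging to distinct eigenvalues: $n+\alpha_1\alpha_2=0$.

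\textbf{Main obstacle.} The only delicate points are the bookkeeping in Step 2 (the coefficient of $q_i$ and of $J$) and checking that the $n+1$ listed vectors are linearly independent. Independence holds because the first $n-1$ vectors lie in $\mathbf{1}^\perp\oplus 0$, while the last two span the complementary invariant plane. This accounts for the entire spectrum.
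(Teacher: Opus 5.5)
Your argument is correct, but you should commit on the coefficient you flagged instead of leaving it open. The eigenvalues attached to $\mathbf{q}_2,\dots,\mathbf{q}_n$ are $2n-3-2q_i$, and the statement's $2n-3-q_i$ is a slip. The paper's own proof writes down exactly your block $(2n-3)I+2J-2\mathcal{Q}(G)$ and then drops the factor $2$ in $\prod_{i=2}^{n}[t-(2n-3-q_i)]$.

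A trace count confirms this in general. On one side, $\operatorname{tr}\mathcal{E}^{\mathcal Q}(G\vee K_1)=n(2n-2r-1)+n=2n^2-2rn$. On the other, using $\sum_{i\ge 2}q_i=nr-2r$, the list $\{2n-3-cq_i\}_{i\ge 2}\cup\{t_1,t_2\}$ sums to $2n^2-4r-cr(n-2)$. Since $n\ge 3$ and $r\ge 1$, this forces $c=2$.

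Concretely, take $G=C_5$. The wheel $C_5\vee K_1$ is $\mathcal{E}$-transmission regular of degree $5$, so $\mathcal{E}^{\mathcal Q}=5I+\mathcal{E}$ has eigenvalues $4\pm\sqrt5$ (each twice), $10$ and $4$. The first four equal $7-2q_i$ with $q_i=(3\pm\sqrt5)/2$, not $7-q_i$. Everything else checks out: all of (i), and in (ii) the quadratic, $\alpha_1$, and $\alpha_2=-n/\alpha_1$. So what you prove is the corrected statement, which is the true one.

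On the route, the two arguments differ in how they extract the spectrum. The paper computes $\det(tI-\mathcal{E}^{\mathcal L})$ via a Schur complement and the lemma $\mathbf{1}^T(tI-A)^{-1}\mathbf{1}=n/(t-\alpha)$, and only afterwards checks eigenvectors. You instead split $\mathbb{R}^{n+1}$ into the invariant subspaces $\mathbf{1}^{\perp}\oplus 0$ and $\operatorname{span}\{\left[\begin{smallmatrix}\mathbf{1}\\ 0\end{smallmatrix}\right],\left[\begin{smallmatrix}\mathbf{0}\\ 1\end{smallmatrix}\right]\}$. This gives eigenvalues, eigenvectors and multiplicities at once.

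Your version has two advantages. It avoids the tacit restriction of the Schur-complement step to $t$ outside the spectrum of the top-left block. It also verifies the eigenvector $\left[\begin{smallmatrix}\mathbf{1}\\ -n\end{smallmatrix}\right]$ by direct computation, rather than through the paper's rather loose orthogonality remark.

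One point needs adding to make your independence claim complete in (ii). The quadratic equals $(t-a)(t-n)-n$ with $a=4n-4r-3$, and its discriminant is $(a-n)^2+4n>0$. Hence $t_1\neq t_2$ and neither root equals $n$. So $\alpha_1$ is well defined and the two vectors in the plane are linearly independent.
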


\begin{proof}
    
\begin{enumerate}
    \item[(i)] Since
\[
\mathcal{E}(G\vee K_1)=
\begin{bmatrix}
2J-2I-2\mathcal{A}(G) & \mathbf{1}\\
\mathbf{1}^{T} & 0
\end{bmatrix},
\]

\[
\operatorname{Diag}\!\big(\mathrm{Tr}_{\mathcal E}(G\vee K_1)\big)=
\begin{bmatrix}
(2n-2r-1)I & \mathbf{0}\\
\mathbf{0}^{T} & n
\end{bmatrix},
\]
it follows that
\[
\mathcal{E}^{\mathcal L}(G\vee K_1)=
\begin{bmatrix}
(2n+1)I-2J-2\mathcal{L}(G) & -\mathbf{1}\\
-\mathbf{1}^{T} & n
\end{bmatrix}.
\]

Hence,
\[
\det\!\big(tI-\mathcal{E}^{\mathcal L}(G\vee K_1)\big)
=\det
\begin{bmatrix}
tI-\big((2n+1)I-2J-2\mathcal{L}(G)\big) & \mathbf{1}\\
\mathbf{1}^{T} & t-n
\end{bmatrix}.
\]

Applying the Schur complement and Lemma~\ref{Coronal}, we obtain the following:
\[
\begin{aligned}
\det\!\big(tI-\mathcal{E}^{\mathcal L}(G\vee K_1)\big)
&=\det\!\big[tI-\big((2n+1)I-2J-2\mathcal{L}(G)\big)\big] \\
&\quad \times \Big[(t-n)-\mathbf{1}^{T}
\big(tI-\big((2n+1)I-2J-2\mathcal{L}(G)\big)\big)^{-1}
\mathbf{1}\Big].
\end{aligned}\]
Consequently,
\[
\det\!\big(tI-\mathcal{E}^{\mathcal L}(G\vee K_1)\big)
=\prod_{i=1}^{n-1}\big[t-(2n+1-2\mu_i)\big]\; t\,(t-n-1).
\]
Therefore,
$\{\,2n+1-2\mu_1,\;2n+1-2\mu_2,\;\ldots,\;2n+1-2\mu_{n-1},\;0,\;n+1\,\}$
the eccentric Laplacian spectrum of $G\vee K_1$.

One may see that for $1\leq i\leq n-1$ \[
\mathcal{E}^{\mathcal L}(G\vee K_1)\begin{bmatrix}
\mathbf{u}_i\\
0
\end{bmatrix}=
\begin{bmatrix}
(2n+1)I-2J-2\mathcal{L}(G) & -\mathbf{1}\\
-\mathbf{1}^{T} & n
\end{bmatrix}\begin{bmatrix}
\mathbf{u}_i\\
0
\end{bmatrix}=(2n+1-2\mu_i)\begin{bmatrix}
\mathbf{u}_i\\
0
\end{bmatrix}\]
 and corresponding to eigenvalue $0$ \[
\mathcal{E}^{\mathcal L}(G\vee K_1)\begin{bmatrix}
\mathbf{u}_n\\
1
\end{bmatrix}=
\begin{bmatrix}
(2n+1)I-2J-2\mathcal{L}(G) & -\mathbf{1}\\
-\mathbf{1}^{T} & n
\end{bmatrix}\begin{bmatrix}
\mathbf{u}_n\\
1
\end{bmatrix}=0\begin{bmatrix}
\mathbf{u}_n\\
1
\end{bmatrix}.\]
  Let $\begin{bmatrix}\mathbf{x}\\\alpha\end{bmatrix}$be an eigenvector corresponding to the eigenvalue $n+1$, where $\mathbf{x}\in \mathbb{R}^{n}$ and $\alpha\in \mathbb{R}$. Since $\begin{bmatrix}\mathbf{u}_i\\\alpha\end{bmatrix} \perp  \begin{bmatrix}\mathbf{x}\\\alpha\end{bmatrix}\perp \begin{bmatrix}\mathbf{u}_n\\1\end{bmatrix}$ for $1\leq i\leq n-1$, implies that $\mathbf{x}=\mathbf{u}_n$ and $\alpha=-n$.
\item[(ii)] Since,
\[
\mathcal{E}^{\mathcal Q}(G\vee K_1)=
\begin{bmatrix}
(2n-3)I+2J-2\mathcal{Q}(G) & \mathbf{1}\\
\mathbf{1}^T & n
\end{bmatrix}.
\]
Hence,
\[
\det\!\big(tI-\mathcal{E}^{\mathcal Q}(G\vee K_1)\big)
=
\det
\begin{bmatrix}
tI-\big((2n-3)I+2J-2\mathcal{Q}(G)\big) & -\mathbf{1}\\
-\mathbf{1}^T & t-n
\end{bmatrix}.
\]
Using the Schur complement together with Lemma~\ref{Coronal}, we obtain
\[
\det\!\big(tI-\mathcal{E}^{\mathcal Q}(G\vee K_1)\big)
=
\prod_{i=2}^{n}\!\left[t-(2n-3-q_i)\right]
\left(t^2-(5n-4r-3)t+(4n^2-4rn-4n)\right).
\]
Therefore, the eccentric signless Laplacian spectrum of $G\vee K_1$ is
\[
\{2n-3-q_2,\,2n-3-q_3,\,\dots,\,2n-3-q_n,\,t_1,\,t_2\},
\]
where $t_1$, $t_2$ are the zeros of the quadratic polynomial
$t^2-(5n-4r-3)t+4n(n-r-1).$  In a similar manner, it can be verified that the vectors
\[
\begin{bmatrix}
\mathbf{q}_2\\
0
\end{bmatrix},
\begin{bmatrix}
\mathbf{q}_3\\
0
\end{bmatrix},
\dots,
\begin{bmatrix}
\mathbf{q}_n\\
0
\end{bmatrix},
\begin{bmatrix}
\mathbf{q}_1\\
\alpha_1
\end{bmatrix},
\begin{bmatrix}
\mathbf{q}_1\\
\alpha_2
\end{bmatrix}
\]
are corresponding eigenvectors of $\mathcal{E}^{\mathcal Q}(G\vee K_1),$ where
$\alpha_1=\frac{n}{t_1-n}
\quad \text{and} \quad
\alpha_2=-\frac{n}{\alpha_1}. 
$\qedhere
\end{enumerate}
\end{proof}
\begin{remark}
  If $r=\frac{n-1}{2}$ then $G\vee K_1$ is an $\mathcal{E}$-transmission regular graph of $\mathcal{E}$-transmission degree $n$. Then $\begin{bmatrix}
	\mathbf{q}_1\\
	1
\end{bmatrix}$,$\begin{bmatrix}
\mathbf{q}_1\\
-n
\end{bmatrix}$ are the eigenvectors of $\mathcal{E^{Q}}(G\vee K_1)$ corresponding to eigenvalues $2n,n-1$ $(\text{say} ~ t_1,t_1)$ respectively.  
\end{remark}
\begin{center}
	\begin{minipage}{0.45\textwidth}
		\centering
		\begin{tikzpicture}[scale=1.5, every node/.style={circle, draw, fill=black, inner sep=1pt}]
			\node (1) at (0,0) [label=below:$1$] {};
			\node (2) at (0,1) [label=above:$2$] {};
			\node (3) at (-0.95,0.3) [label=left:$3$] {};
			\node (4) at (-0.6,-0.8) [label=below:$4$] {};
			\node (5) at (0.6,-0.8) [label=below:$5$] {};
			\node (6) at (0.95,0.3) [label=right:$6$] {};
			
			\draw (1) -- (2);
			\draw (1) -- (3);
			\draw (1) -- (4);
			\draw (1) -- (5);
			\draw (1) -- (6);
			\draw (2) -- (3);
			\draw (3) -- (4);
			\draw (4) -- (5);
			\draw (5) -- (6);
			\draw (6) -- (2);
		\end{tikzpicture}
		
		{\small \textbf{Figure 2.} $C_5\vee K_1$}
	\end{minipage}
	\hfill
	\begin{minipage}{0.45\textwidth}
		\centering
		\begin{tikzpicture}[scale=1.5, every node/.style={circle, draw, fill=black, inner sep=1pt}]
			\node (1) at (0,0) [label=below:$1$] {}; 
			\node (2) at (-1,1) [label=above:$2$] {};
			\node (3) at (1,1) [label=above:$3$] {};
			\node (4) at (1,-1) [label=below:$4$] {};
			\node (5) at (-1,-1) [label=below:$5$] {};
			
			\draw (2) -- (3) -- (4) -- (5) -- (2);
			
			\draw (1) -- (2);
			\draw (1) -- (3);
			\draw (1) -- (4);
			\draw (1) -- (5);
		\end{tikzpicture}
		
		{\small \textbf{Figure 3.} $C_4\vee K_1$}
	\end{minipage}
\end{center}
The first graph is a $\mathcal{E}$-transmission regular graph but not a degree regular graph with the smallest order, and the second graph is neither degree regular nor a $\mathcal{E}$-transmission regular graph.

In \cite{Aouchiche}, the authors established a relationship between the Laplacian spectrum and the distance Laplacian spectrum of graphs with diameter $2$. Motivated by this line of research, we establish a similar correspondence between the Laplacian (signless Laplacian) spectrum and the eccentric Laplacian (eccentric signless Laplacian) spectrum for this class of graphs. First, we find the structure of the eccentric Laplacian matrix of a connected graph of diameter $2$.

   \begin{theorem}\label{eccentric laplacian of diameter 2}
   	Let $G$ be a connected graph of order $n$ with diameter $2$. Then $$ \mathcal{E^L}(G)=\begin{bmatrix}
   	 	nI-J  & -J \\
   	 	-J  & (2n-u)I-2J+2\mathcal{L}(G^\prime)
   	 \end{bmatrix},$$ where $G^\prime$ is induced subgraph of $V(G)\setminus U(G)$, and $u=|U(G)|$.
\end{theorem}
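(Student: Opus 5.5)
The plan is a direct entrywise computation. I order the vertices so that the $u$ universal vertices of $U(G)$ come first and the $n-u$ vertices of $V(G)\setminus U(G)$ come second, and I write $J$, $I$ for blocks of the appropriate sizes. Since $\mathrm{diam}(G)=2$, every vertex has eccentricity $1$ or $2$. A vertex has eccentricity $1$ exactly when it is universal. Hence $e(v)=1$ for $v\in U(G)$ and $e(v)=2$ for $v\notin U(G)$.

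\textbf{Step 1: determine $\mathcal{E}(G)$ blockwise from its definition.}
\begin{itemize}
\item For two distinct universal vertices, $d=1=\min\{1,1\}$, so the entry is $1$.
\item For a universal vertex $v_i$ and a non-universal vertex $v_j$, $d=1=\min\{1,2\}$, so the entry is $1$.
\item For two distinct non-universal vertices, the entry is $2$ if they are non-adjacent (then $d=2=\min\{2,2\}$). It is $0$ if they are adjacent (then $d=1<2$).
\end{itemize}
Adjacency among non-universal vertices is exactly adjacency in the induced subgraph $G'$. Therefore
\[
\mathcal{E}(G)=\begin{bmatrix} J-I & J\\ J & 2\big(J-I-\mathcal{A}(G')\big)\end{bmatrix}.
\]

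\textbf{Step 2: compute the $\mathcal{E}$-transmissions as row sums.}
\begin{itemize}
\item For a universal vertex, the row sum is $(u-1)+(n-u)=n-1$.
\item For a non-universal vertex $v$, the row sum is $u+2\big(n-u-1-\deg_{G'}(v)\big)$.
\end{itemize}
Thus
\[
\operatorname{Diag}(\mathrm{Tr}_{\mathcal E}(G))=\begin{bmatrix}(n-1)I & O\\ O & (2n-u-2)I-2\operatorname{Diag}(\mathrm{Deg}(G'))\end{bmatrix}.
\]

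\textbf{Step 3: subtract and simplify each block.}
\begin{itemize}
\item The top-left block is $(n-1)I-(J-I)=nI-J$.
\item The off-diagonal blocks are $-J$.
\item The bottom-right block is
\[
(2n-u-2)I-2\operatorname{Diag}(\mathrm{Deg}(G'))-2J+2I+2\mathcal{A}(G')=(2n-u)I-2J-2\big(\operatorname{Diag}(\mathrm{Deg}(G'))-\mathcal{A}(G')\big).
\]
By the definition of the Laplacian, this equals $(2n-u)I-2J-2\mathcal{L}(G')$.
\end{itemize}

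This bookkeeping is the only delicate step, and it needs care on the sign of the $\mathcal{L}(G')$ term. My computation produces $-2\mathcal{L}(G')$, not $+2\mathcal{L}(G')$ as printed in the statement. I would verify this on a small example such as $C_4\vee K_1$ (Figure 3), where $G'=C_4$. If the check confirms it, the $+$ in the displayed formula should be read as $-$; the structure of the result is otherwise unaffected.

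I would also remark on the degenerate case $u=0$, where the first block row and column are empty. The formula then reduces to $\mathcal{E^L}(G)=2nI-2J-2\mathcal{L}(G)$, since $G'=G$.
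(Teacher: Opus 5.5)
Your proposal is correct and follows the paper's direct blockwise computation. The only difference is cosmetic: the paper writes the transmissions of non-universal vertices via $\deg_G$ and then substitutes $\deg_G=u+\deg_{G'}$, whereas you work with $\deg_{G'}$ from the start. Your sign check is also right: the paper's own proof ends with $(2n-u)I-2J-2\mathcal{L}(G')$, so the $+2\mathcal{L}(G')$ in the displayed statement is a typo.
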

   \begin{proof}
Since $\operatorname{diam}(G)=2$, we have $e(v_i)\in\{1,2\}$ for all $v_i\in V(G)$. Consequently,
\[
\mathrm{Tr}_{\mathcal{E}}(i)=
\begin{cases}
n-1, & \text{if } e(v_i)=1,\\[4pt]
2\big((n-1)-\deg(i)\big)+|U(G)|, & \text{if } e(v_i)=2.
\end{cases}
\]

Let $V(G)=\{v_1,\dots,v_u,v_{u+1},\dots,v_n\}$ be such that 
$\deg(v_i)=n-1$ for $1\leq i\leq u$ and $\deg(v_i)<n-1$ for $u+1\leq i\leq n$, 
where $u=|U(G)|$.
Then
\[
\operatorname{Diag}\!\left(\mathrm{Tr}_{\mathcal{E}}(G)\right)=
\begin{bmatrix}
(n-1)I  & O \\
O  & \left[2\{(n-1)-\deg(v_i)+u\}\right]I 
\end{bmatrix}
\]
and
\[
\mathcal{E}(G)=
\begin{bmatrix}
J-I & J \\
J   & 2J-2I-\mathcal{A}(G')
\end{bmatrix},
\]
where $G'$ is the induced subgraph of $V(G)\setminus U(G)$.
Thus,
\[
\mathcal{E^L}(G)=
\begin{bmatrix}
nI-J & -J \\
-J &
(2n+u)I-2J+2\mathcal{A}(G')-2\operatorname{Diag}\!\big(\deg_G(V(G)\setminus U(G))\big)
\end{bmatrix}.
\]

Note that
\[
\operatorname{Diag}\!\big(\deg_G(V(G)\setminus U(G))\big)
= uI+\operatorname{Diag}\!\big(\deg_{G'}(V(G)\setminus U(G))\big).
\]

Hence,
\[
\mathcal{E^L}(G)=
\begin{bmatrix}
nI-J & -J\\
-J & (2n-u)I-2J-2\mathcal{L}(G')
\end{bmatrix}. \qedhere
\]
\end{proof}

  \begin{corollary}
Let $G$ be a connected graph of diameter $2$, without any universal vertex, on $n$ vertices, and let $\bar{G}$ denote its complement; then $\mathcal{L}(\bar{G})=\frac{1}{2}\mathcal{E^L}(G).$ Proposition~2.1 of \cite{Wang2022} also follows from Theorem~\ref{eccentric laplacian of diameter 2}.
  \end{corollary}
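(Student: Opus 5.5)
The plan is to verify the identity directly from the definition of the eccentricity matrix. This avoids relying on the block form in Theorem~\ref{eccentric laplacian of diameter 2}, whose sign on the $\mathcal{L}(G')$ term differs between its statement and the last line of its proof. Afterwards I will reconcile the result with that theorem as a consistency check.

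\emph{Step 1: all eccentricities equal $2$.} Since $G$ has diameter $2$ and $U(G)=\emptyset$, no vertex is adjacent to all others. Hence every vertex has eccentricity $2$, i.e.\ $e(v_i)=2$ for all $i$, and $\min\{e(v_i),e(v_j)\}=2$ for every pair.

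\emph{Step 2: the eccentricity matrix.} By the defining formula, $\mathcal{E}_{ij}(G)=2$ exactly when $d(v_i,v_j)=2$, i.e.\ when $v_iv_j\notin E(G)$ and $i\neq j$. If $v_iv_j\in E(G)$, then $d(v_i,v_j)=1<2$, so $\mathcal{E}_{ij}(G)=0$. Hence $\mathcal{E}(G)=2\mathcal{A}(\bar G)$; this is, I expect, precisely Proposition~2.1 of \cite{Wang2022}, which therefore follows.

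\emph{Step 3: transmissions and the Laplacian.} From Step 2, $\mathrm{Tr}_{\mathcal E}(v_i)=2(n-1-\deg_G(v_i))=2\deg_{\bar G}(v_i)$. Therefore
\[
\mathcal{E^L}(G)=2\operatorname{Diag}(\mathrm{Deg}(\bar G))-2\mathcal{A}(\bar G)=2\mathcal{L}(\bar G).
\]
Dividing by $2$ gives the claim.

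\emph{Consistency with Theorem~\ref{eccentric laplacian of diameter 2}.} With $u=0$ the first block row disappears and $G'=G$. The remaining block is $2nI-2J-2\mathcal{L}(G)$, which equals $2\mathcal{L}(\bar G)$ by the standard identity $\mathcal{L}(G)+\mathcal{L}(\bar G)=nI-J$. So the corollary is the $u=0$ specialization of that theorem, read with the minus sign from the end of its proof.

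The only delicate point is this sign issue in the cited theorem, which is why the direct computation is the primary argument. Everything else is routine.
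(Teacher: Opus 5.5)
Your proof is correct. The paper gives no separate argument for this corollary. It is meant as the case $u=0$ of Theorem~\ref{eccentric laplacian of diameter 2}, where $G'=G$ and $\mathcal{E^L}(G)=2nI-2J-2\mathcal{L}(G)$; the same expression is derived again in the proof of Theorem~\ref{main_theorem}(i). The step to $2\mathcal{L}(\bar G)$ is then the identity $\mathcal{L}(G)+\mathcal{L}(\bar G)=nI-J$, which is exactly your consistency check.

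Your primary route differs: you compute $\mathcal{E}(G)=2\mathcal{A}(\bar G)$ from the definition and read off $2\mathcal{L}(\bar G)$ directly, never passing through $\mathcal{L}(G)$. This is shorter and self-contained, and it avoids the misprints in the cited theorem. Your sign diagnosis is right: with the statement's $+2\mathcal{L}(G')$ the corollary would be false, and the proof's $-2\mathcal{L}(G')$ is the correct one. That proof also drops a factor $2$ on $\mathcal{A}(G')$ in its block for $\mathcal{E}(G)$, and writes the diagonal as $2\{(n-1)-\deg(v_i)+u\}$ instead of $2(n-1-\deg(v_i))+u$. In exchange, the paper's route places the corollary inside a block description that remains valid when universal vertices are present.

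On the second sentence: the paper never says what Proposition~2.1 of \cite{Wang2022} states, so identifying it with $\mathcal{E}(G)=2\mathcal{A}(\bar G)$ is a guess on your part. Since the paper derives it from the theorem with arbitrary $u$, it may instead be the block form of $\mathcal{E}(G)$ for all diameter-$2$ graphs. Your Steps~1--2 give that just as easily once you note that a universal vertex has eccentricity $1$. Then every off-diagonal entry in its row and column is $1$, which yields $\mathcal{E}(G)=\begin{bmatrix} J-I & J\\ J & 2\mathcal{A}(\overline{G'})\end{bmatrix}$.
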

 \begin{theorem}\label{main_theorem} 	
 Let $G$ be a connected graph of diameter $2$, without any universal vertex, of order $n$. Let $\sigma(\mathcal{L}(G))=\{\mu_1, \mu_2 ,\dots, \mu_{n-1}, \mu_n=0\}$, and $\sigma(\mathcal{Q}(G)=\{2r=q_1, q_2,\dots, q_n\}.$ Then
    \begin{enumerate}
        \item[(i)] $\sigma(\mathcal{E^L}(G))=\{2n-2\mu_{n-1}, 2n-2\mu_{n-2}, \dots, 2n-2\mu_1, \xi_n^\mathcal{L}=0\}$, and for each eigenvalue, the corresponding eigenvectors are also the same.
        \item[(ii)] Furthermore, if $G$ is an $r$-regular (adjacency) graph, then $\sigma(\mathcal{E^Q}(G)=\{4(n-r-1),2n-4-2q_2,\dots,2n-4-2q_n\}$, and for each eigenvalue, the corresponding eigenvectors are also the same.
      \end{enumerate}
 \end{theorem}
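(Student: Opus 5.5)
The plan is to specialise Theorem~\ref{eccentric laplacian of diameter 2} to the case $U(G)=\emptyset$ and then rewrite $\mathcal{E^L}(G)$ and $\mathcal{E^Q}(G)$ as polynomials in $J$ and in $\mathcal{L}(G)$ or $\mathcal{Q}(G)$. The spectra then follow from the fact that $\mathcal{L}(G)$ and $\mathcal{Q}(G)$ (when $G$ is regular) share an orthogonal eigenbasis with $J$.

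Since $G$ has no universal vertex and $\mathrm{diam}(G)=2$, every vertex has eccentricity $2$. So $u=0$ and $G'=G$. Also $\mathcal{E}(G)=2(J-I-\mathcal{A}(G))$, because the nonzero entries are exactly the distance-$2$ pairs, and $\mathrm{Tr}_{\mathcal E}(v_i)=2(n-1-\deg(v_i))$. I will redo this small computation directly rather than quote the displayed block form, to be safe about the sign in front of $\mathcal{L}(G')$. It gives
\[
\mathcal{E^L}(G)=2(n-1)I-2\operatorname{Diag}(\mathrm{Deg}(G))-2J+2I+2\mathcal{A}(G)=2nI-2J-2\mathcal{L}(G)=2\mathcal{L}(\bar G),
\]
which is consistent with the Corollary. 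Now choose an orthonormal eigenbasis $\mathbf{u}_1,\dots,\mathbf{u}_{n-1},\mathbf{u}_n=\frac{1}{\sqrt n}\mathbf{1}$ of the symmetric matrix $\mathcal{L}(G)$. For $i\le n-1$ we have $\mathbf{u}_i\perp\mathbf{1}$, hence $J\mathbf{u}_i=\mathbf{0}$ and $\mathcal{E^L}(G)\mathbf{u}_i=(2n-2\mu_i)\mathbf{u}_i$. For $\mathbf{1}$ we get $\mathcal{E^L}(G)\mathbf{1}=(2n-2n-0)\mathbf{1}=\mathbf{0}$. Listing these in nonincreasing order gives (i), and by construction the eigenvectors coincide with those of $\mathcal{L}(G)$.

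For (ii), $G$ is $r$-regular, so $\mathrm{Tr}_{\mathcal E}(v_i)=2(n-1-r)$ for all $i$ and $\mathcal{A}(G)=\mathcal{Q}(G)-rI$. Substituting yields
\[
\mathcal{E^Q}(G)=2(n-1-r)I+2J-2I-2\mathcal{A}(G)=(2n-4)I+2J-2\mathcal{Q}(G).
\]
Take an orthonormal eigenbasis of $\mathcal{Q}(G)$ with $\mathbf{q}_1=\frac{1}{\sqrt n}\mathbf{1}$ (eigenvalue $q_1=2r$), the remaining $\mathbf{q}_i$ being orthogonal to $\mathbf{1}$. On $\mathbf{1}$ the eigenvalue is $2n-4+2n-4r=4(n-r-1)$. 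On $\mathbf{q}_i$ with $i\ge2$ the term $J\mathbf{q}_i$ vanishes, giving $2n-4-2q_i$ with the same eigenvector.

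The only real obstacle is bookkeeping: getting the diagonal $\mathcal{E}$-transmissions and the sign of the Laplacian term right in the $u=0$ case. The block formula as displayed in Theorem~\ref{eccentric laplacian of diameter 2} carries a sign ambiguity, so I will verify the identity $\mathcal{E^L}(G)=2nI-2J-2\mathcal{L}(G)$ entrywise before using it. Once the matrix identities are fixed, the simultaneous diagonalisation with $J$ is routine.
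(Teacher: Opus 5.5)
Your proposal is correct and follows the paper's proof essentially step for step. The paper also computes $\mathcal{E^L}(G)=2nI-2J-2\mathcal{L}(G)$ and $\mathcal{E^Q}(G)=(2n-4)I+2J-2\mathcal{Q}(G)$ directly, then uses that the remaining eigenvectors are orthogonal to $\mathbf{1}$, so that $J$ annihilates them. Your caution about the preceding block-form theorem for diameter~$2$ is justified: the correct lower-right block is $(2n-u)I-2J-2\mathcal{L}(G')$, as derived at the end of that theorem's proof, not the $+2\mathcal{L}(G')$ displayed in its statement.
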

 \begin{proof}
\begin{enumerate}
    \item[(i)] For a connected graph of diameter $2$ without universal vertices, the $\mathcal{E}$-transmission of each vertex $v_i\in V(G)$ is $\mathrm{Tr}_\mathcal{E}(v_i)=2(n-1-deg(v_i))$, where $deg(v_i)$ denotes the degree of $v_i$. Also, the eccentricity matrix for such graphs is $\mathcal{E}(G)=2J-2I-2\mathcal{A}(G)$. Thus, the eccentric Laplacian matrix can be written as 
    \begin{align*}
\mathcal{E}^{\mathcal L}(G)
&= (2n-2)I-2\operatorname{Diag}(\operatorname{Deg}(G))-2J+2I+2\mathcal{A}(G) \\
&= 2nI-2J-2\mathcal{L}(G).
\end{align*}
 Clearly, $\mathcal{E^L}(G)\mathbf{1}=0\mathbf{1}$. Consider any nonzero Laplacian eigenvalue $\mu_i$ with $1\leq i \leq n-1$ and let $\mathbf{x}_i$ denote a Laplacian eigenvector for $\mu_i$. Since $\mathcal{L}(G)$ is a symmetric matrix and $\mathbf{1}$ is an eigenvector for $\mu_n=0$, all other eigenvectors are orthogonal to $\mathbf{1}$. Thus,
$\mathcal{E^L}(G)\mathbf{x}_i=2n\mathbf{x}_i-2\mu_i\mathbf{x}_i=(2n-2\mu_i)\mathbf{x}_i$, for $1\leq i \leq n-1$.
\item[(ii)] Note that, \begin{align*}
\mathcal{E}^{\mathcal Q}(G)
&= (2n-2)I-2\operatorname{Diag}(\operatorname{Deg}(G))+2J-2I-2\mathcal{A}(G) \\
&= (2n-4)I+2J-2\mathcal{Q}(G).
\end{align*} Since $\mathcal{Q}(G)\mathbf{1}=2r\mathbf{1}$ implies that $\mathcal{E^Q}(G)\mathbf{1}=4(n-r-1)\mathbf{1}$. Now, for any other signless Laplacian eigenvalue $q_i$ where $2\leq i \leq n$, let $\mathbf{y}_i$ denote the corresponding eigenvector. Then $\mathcal{E^Q}(G)\mathbf{y}_i=(2n-4-2q_i)\mathbf{y}_i$, $2\leq i \leq n$. \qedhere
\end{enumerate} 
 \end{proof}
Graham and Lov\'asz computed the inverse of the distance matrix of a tree \cite{Graham1978} in terms of its Laplacian matrix. Later, Bapat et al. \cite{Bapat2005} extended this result to weighted trees. A relationship between the eccentric Laplacian matrix and the Laplacian matrix of a graph with diameter $2$ can be obtained using Theorem~\ref{main_theorem}.
 \begin{corollary}
 		Let $G$ be a connected graph of diameter $2$, without any universal vertex, of order $n$. Then 
 		$\mathcal{E^L}(G)=2nI-RS^T\mathcal{L}(G)SR^T,$ where $R$ and $S$ are orthogonal matrices defined by $R=[\mathbf{r}_1,\mathbf{r}_2,\dots, \mathbf{r}_{n-1},\mathbf{r}_n]$ and $S=[\mathbf{r}_{n-1},\mathbf{r}_{n-2},\dots,\mathbf{r}_1,\mathbf{r}_n]$ with $\mathbf{r}_1,\mathbf{r}_2,\dots, \mathbf{r}_{n-1},\mathbf{r}_n$ being the eigenvectors of $\mathcal{E^L}(G)$ corresponding to $\{\xi_1^\mathcal{L},\xi_2^\mathcal{L},\dots,\xi_n^\mathcal{L}\}$, respectively.
 \end{corollary}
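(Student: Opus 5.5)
The plan is to verify the identity $\mathcal{E^L}(G)=2nI-RS^T\mathcal{L}(G)SR^T$ by comparing how both sides act on the orthonormal eigenbasis $\{\mathbf{r}_1,\dots,\mathbf{r}_n\}$ of $\mathcal{E^L}(G)$. Both sides are real symmetric, so agreement on a basis gives equality of matrices. I expect this check to succeed only after the indexing conventions are fixed, and possibly not at all as literally written.

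First I would use Theorem~\ref{main_theorem}(i) to identify the columns of $R$ concretely. Since $\mathcal{E^L}(G)=2nI-2J-2\mathcal{L}(G)$ and $\mathcal{L}(G)$ has eigenvalues $\mu_1\ge\dots\ge\mu_{n-1}\ge\mu_n=0$, the ordering $\xi_1^{\mathcal L}\ge\dots\ge\xi_n^{\mathcal L}=0$ gives the following:
\begin{itemize}
\item $\xi_i^{\mathcal L}=2n-2\mu_{n-i}$ for $1\le i\le n-1$;
\item each $\mathbf{r}_i$ with $i\le n-1$ may be chosen as a unit Laplacian eigenvector for $\mu_{n-i}$, orthogonal to $\mathbf{1}$;
\item $\mathbf{r}_n=\mathbf{1}/\sqrt n$.
\end{itemize}
With these choices, column $j$ of $S$ is $\mathbf{r}_{n-j}$, a Laplacian eigenvector for $\mu_j$ when $j\le n-1$, and its last column is $\mathbf{r}_n$. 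Because $R$ and $S$ have the same orthonormal columns in a different order, both are orthogonal and $SR^T$ is a permutation of the basis.

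Next I would compute the right-hand side on each $\mathbf{r}_i$. We have $R^T\mathbf{r}_i=\mathbf{e}_i$, so $SR^T\mathbf{r}_i=S\mathbf{e}_i=\mathbf{r}_{n-i}$ for $i\le n-1$, and $SR^T\mathbf{r}_n=\mathbf{r}_n$. Applying $\mathcal{L}(G)$ to $\mathbf{r}_{n-i}$ gives $\mu_i\mathbf{r}_{n-i}$. Applying $S^T$ then returns $\mathbf{e}_i$, and $R$ returns $\mathbf{r}_i$. So $RS^T\mathcal{L}(G)SR^T$ has eigenpair $(\mu_i,\mathbf{r}_i)$ for $i\le n-1$ and $(0,\mathbf{r}_n)$. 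Finally I would evaluate the left-hand side on the same vectors, namely $\mathcal{E^L}(G)\mathbf{r}_i=\xi_i^{\mathcal L}\mathbf{r}_i$, and compare the two eigenvalue lists term by term.

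The main obstacle is exactly this comparison. On $\mathbf{r}_i$ the left side has eigenvalue $2n-2\mu_{n-i}$, while $2nI$ minus the conjugated Laplacian has eigenvalue $2n-\mu_i$. Two mismatches appear:
\begin{itemize}
\item the factor $2$ multiplying $\mathcal{L}(G)$ in $\mathcal{E^L}(G)=2nI-2J-2\mathcal{L}(G)$;
\item on $\mathbf{r}_n$, the term $-2J$ contributes $-2n$, which makes $\xi_n^{\mathcal L}=0$ rather than $2n$.
\end{itemize}
The index reversal is the purpose of $S$, so I would first check carefully whether the paper's ordering conventions make the permutation $SR^T$ align $\mu_{n-i}$ with $\mathbf{r}_i$. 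This is where the argument either closes or shows that the identity requires the factor $2$ and the rank-one correction $2J=2n\,\mathbf{r}_n\mathbf{r}_n^T$ to hold. If the conventions cannot be reconciled, the step-by-step basis comparison above pinpoints precisely which terms of the stated identity fail, so the argument would show the stated identity to be false as written.
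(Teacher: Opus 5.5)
Your computation is correct, and it settles the question, so you can drop the hedge. The identity is false as written, for every graph in the class and for every choice of the eigenvectors. No ordering convention can rescue it.

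\textbf{A convention-free certificate.} $RS^T$ is orthogonal, so $M=RS^T\mathcal{L}(G)SR^T$ has the same eigenvalues as $\mathcal{L}(G)$, all in $[0,n]$. Hence every eigenvalue of $2nI-M$ lies in $[n,2n]$, and $2nI-M$ is nonsingular. But $\mathcal{E^L}(G)\mathbf{1}=\mathbf{0}$. Equivalently, with $m=|E(G)|$, we have $\operatorname{tr}\mathcal{E^L}(G)=\sum_i 2(n-1-\deg(v_i))=2n(n-1)-4m$, whereas $\operatorname{tr}(2nI-M)=2n^2-2m$. For $C_5$ these are $20$ and $40$.

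\textbf{The ordering question.} You asked whether $SR^T$ aligns $\mu_{n-i}$ with $\mathbf{r}_i$. Your own computation already answers it: it does not. The $\mathbf{r}_i$ are already in reversed Laplacian order, since $\mathcal{L}(G)\mathbf{r}_i=\mu_{n-i}\mathbf{r}_i$, and $S$ reverses once more. So this is a third, independent error. Even after restoring the factor $2$ and the rank-one term, $2nI-2J-2RS^T\mathcal{L}(G)SR^T$ equals $\mathcal{E^L}(G)$ only if $\mu_i=\mu_{n-i}$ for all $i\le n-1$. That fails for $C_5$, where $\mu_1\approx 3.618$ and $\mu_4\approx 1.382$.

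\textbf{Where the paper's proof fails.} It breaks at exactly these points.
\begin{enumerate}
\item Its second equality writes $\operatorname{Diag}(\xi_1^{\mathcal L},\dots,\xi_n^{\mathcal L})=2nI-\operatorname{Diag}(\mu_{n-1},\dots,\mu_1,\mu_n)$. But Theorem~\ref{main_theorem}(i) gives $\xi_i^{\mathcal L}=2n-2\mu_{n-i}$ for $i\le n-1$, and $\xi_n^{\mathcal L}=0\neq 2n-\mu_n$.
\item Its third equality replaces $R\operatorname{Diag}(\mu_{n-1},\dots,\mu_1,\mu_n)R^T$ by $RS^T\mathcal{L}(G)SR^T$. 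But $R^T\mathcal{L}(G)R=\operatorname{Diag}(\mu_{n-1},\dots,\mu_1,\mu_n)$, so that matrix is just $\mathcal{L}(G)$. Meanwhile $S^T\mathcal{L}(G)S=\operatorname{Diag}(\mu_1,\dots,\mu_{n-1},\mu_n)$.
\end{enumerate}
Done correctly, using $\mathbf{r}_n\mathbf{r}_n^T=\frac{1}{n}J$, the spectral decomposition gives $R\operatorname{Diag}(\xi_1^{\mathcal L},\dots,\xi_n^{\mathcal L})R^T=2nI-2J-2\mathcal{L}(G)=2\mathcal{L}(\bar G)$. This is the formula already derived in the proof of Theorem~\ref{main_theorem}(i). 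The matrix $S$ plays no role, and the corollary should be replaced by this identity.
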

\begin{proof}
Since $\mathbf{r}_1,\mathbf{r}_2,\dots,\mathbf{r}_{n}$ are eigenvectors of 
$\mathcal{E}^{\mathcal L}(G)$ and $\mathcal{L}(G)$ corresponding to the eigenvalues 
$\{\xi_1^{\mathcal L},\xi_2^{\mathcal L},\dots,\xi_n^{\mathcal L}\}$ and 
$\{\mu_{n-1},\mu_{n-2},\dots,\mu_1, \mu_n\}$, respectively, we have
\begin{align*}
\mathcal{E}^{\mathcal L}(G)
&= R\,\operatorname{Diag}\!\left(\xi_1^{\mathcal L},\xi_2^{\mathcal L},\dots,\xi_n^{\mathcal L}\right)R^{T} \\
&= R\!\left(2nI-\operatorname{Diag}(\mu_{n-1},\mu_{n-2},\dots,\mu_1,\mu_n)\right)R^{T} \\
&= 2nI - R S^{T}\mathcal{L}(G) S R^{T}.\qedhere
\end{align*}
\end{proof}
 \section{Spectral Characterization of $\mathcal{E}$-Bipartite Graphs}
In analogy with bipartite graphs in spectral graph theory and their characterization via matrix decompositions and the spectrum of the associated matrices, we define a new class of graphs based on the structure of the eccentricity matrix.
\begin{definition}
A connected graph $G$ is said to be $\mathcal{E}$-bipartite if, under a suitable labeling of the vertices, its eccentricity matrix can be written in the form of $\begin{bmatrix}
    O & B\\
    B^T & O
    
\end{bmatrix}.$
\end{definition}
\begin{example}
    Tree with odd diameter, cycle of even length, n-barbell graph, cocktail-party graph on $2n$ vertices, and lollipop graphs are such standard $\mathcal{E}$-bipartite graphs.
\end{example}
\begin{lemma}\label{bipartite lemma}
    The spectrum of the matrix in the form $A=\begin{bmatrix}
        O & X\\
        X^T & O
    \end{bmatrix}$, is symmetric about zero.
\end{lemma}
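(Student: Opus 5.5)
The plan is to exhibit an explicit similarity between $A$ and $-A$; since similar matrices have the same characteristic polynomial, this shows that $\lambda$ and $-\lambda$ are eigenvalues of $A$ with the same algebraic multiplicity. Let $X$ be $p\times q$, so that $A$ has order $p+q$. Consider the signature matrix
\[
D=\begin{bmatrix} I_p & O\\ O & -I_q\end{bmatrix},
\]
which satisfies $D=D^{-1}=D^T$.

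The key step is a direct block computation:
\[
DAD=\begin{bmatrix} I_p & O\\ O & -I_q\end{bmatrix}\begin{bmatrix} O & X\\ X^T & O\end{bmatrix}\begin{bmatrix} I_p & O\\ O & -I_q\end{bmatrix}=\begin{bmatrix} O & -X\\ -X^T & O\end{bmatrix}=-A.
\]
It follows that $\det(tI-A)=\det\big(D(tI-A)D\big)=\det(tI+A)=(-1)^{p+q}\det(-tI-A)$. The characteristic polynomial $P_A(t)$ therefore satisfies $P_A(-t)=(-1)^{p+q}P_A(t)$, and its roots, counted with multiplicity, are symmetric about zero.

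As a complementary eigenvector-level description, I would also note the following. If $A\begin{bmatrix}\mathbf{x}\\ \mathbf{y}\end{bmatrix}=\lambda\begin{bmatrix}\mathbf{x}\\ \mathbf{y}\end{bmatrix}$, then $X\mathbf{y}=\lambda\mathbf{x}$ and $X^T\mathbf{x}=\lambda\mathbf{y}$. Consequently $A\begin{bmatrix}\mathbf{x}\\ -\mathbf{y}\end{bmatrix}=-\lambda\begin{bmatrix}\mathbf{x}\\ -\mathbf{y}\end{bmatrix}$. This map is exactly multiplication by $D$, so it carries the $\lambda$-eigenspace bijectively onto the $(-\lambda)$-eigenspace. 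For a symmetric matrix such as the eccentricity matrix, this also gives the equality of geometric multiplicities directly.

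I do not expect any step to be a real obstacle. The only point needing care is multiplicity: the pairing $\lambda\leftrightarrow-\lambda$ must respect multiplicities, and the zero eigenvalue pairs with itself. The similarity argument handles both uniformly through the characteristic polynomial, so I would present it as the main proof and the eigenvector map as an illustration.
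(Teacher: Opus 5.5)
Your proof is correct and rests on the same idea as the paper's: flipping the sign of the second block, i.e.\ conjugating by the signature matrix $D$ with diagonal blocks $I_p$ and $-I_q$. The paper applies this at the eigenvector level, $\begin{bmatrix}\mathbf{x}\\ \mathbf{y}\end{bmatrix}\mapsto\begin{bmatrix}\mathbf{x}\\ -\mathbf{y}\end{bmatrix}$, while your main argument applies it at the matrix level via $DAD=-A$; your characteristic-polynomial version has the advantage of tracking algebraic multiplicities directly and shows the paper's zero-padding step (equalizing block sizes) is unnecessary, since $D$ works for any $p\times q$ block $X$.
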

\begin{proof}
     To equate the dimensions of $O$ and $X$, we adjoin zero rows and zero columns as necessary. Without loss of generality, we henceforth suppose that $O$ is of strictly larger order than $X$. This modification doesn't impact the property we're proving, as appending zero rows and columns to A simply extends it without altering its core structure or the relevant mathematical characteristics. Then If $\begin{bmatrix}
        x^{(1)}\\
        x^{(2)}
    \end{bmatrix}$ is eigenvector correponding $\alpha$ of $A$ then $\begin{bmatrix}
        x^{(1)}\\
        -x^{(2)}
    \end{bmatrix}$ is eigenvector correponding $-\alpha$.
\end{proof}
\begin{lemma}\label{similar matrices}
A matrix $\begin{bmatrix}
    A & B\\
    C & D
\end{bmatrix}$ is similar to $\begin{bmatrix}
    A & -B\\
    -C & D
\end{bmatrix}$.
\end{lemma}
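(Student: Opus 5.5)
The plan is to exhibit an explicit similarity transformation given by a signature matrix. Partition the matrix conformally: $A$ is $p\times p$, $D$ is $q\times q$, $B$ is $p\times q$, and $C$ is $q\times p$. Define
\[
S=\begin{bmatrix} I_p & O\\ O & -I_q\end{bmatrix}.
\]
This matrix is diagonal with entries $\pm 1$, so $S^2=I$, and hence $S$ is invertible with $S^{-1}=S$. In fact $S$ is also orthogonal and symmetric, which will matter for the later application.

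The key step is to compute $S\begin{bmatrix} A & B\\ C & D\end{bmatrix}S^{-1}$ by block multiplication.

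\textbf{Left multiplication by $S$.} This negates the second block row, giving $\begin{bmatrix} A & B\\ -C & -D\end{bmatrix}$.

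\textbf{Right multiplication by $S$.} This negates the second block column, giving $\begin{bmatrix} A & -B\\ -C & D\end{bmatrix}$.

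The diagonal blocks are negated either twice or not at all, while the off-diagonal blocks are negated exactly once. Hence the two matrices are similar, and indeed orthogonally similar.

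There is no genuine obstacle. The only point needing care is that the blocks are conformal: the diagonal blocks $A$ and $D$ must be square, which is implicit in the block form. I will also note the consequence that will be used later. Taking $A=\mathrm{Diag}(\mathrm{Tr}_\mathcal{E})$ restricted to each part, with the off-diagonal blocks coming from an $\mathcal{E}$-bipartite eccentricity matrix, the lemma gives that $\mathcal{E^L}(G)$ and $\mathcal{E^Q}(G)$ are similar, and therefore cospectral.
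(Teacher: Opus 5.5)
Your proof is correct and is the same argument as the paper's: conjugation by the signature matrix $\begin{bmatrix} I & O\\ O & -I\end{bmatrix}$. The paper writes this as $SMS^{T}$, which coincides with your $SMS^{-1}$ because $S$ is symmetric and orthogonal.
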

\begin{proof}
  $\begin{bmatrix}
       I & O\\
       O & -I
   \end{bmatrix}\begin{bmatrix}
    A & B\\
    C & D
\end{bmatrix} \begin{bmatrix}
       I & O\\
       O & -I
   \end{bmatrix}^T=\begin{bmatrix}
    A & -B\\
    -C & D
\end{bmatrix}.$
\end{proof}
Spectral properties of matrices associated with graphs reveal deep structural information about the graph. For instance, it is well-known that the Laplacian spectrum and the signless Laplacian spectrum of a graph coincide if and only if the graph is bipartite \cite{Cvetkovic2007}. Inspired by such characterizations, we present a similar spectral criterion in the context of $\mathcal{E^L}(G)$ and $\mathcal{E^Q}(G)$, offering new insight into the interplay between graph structure and spectrum. A matrix $A$ is said to be reducible if it is permutationally similar to a block upper triangular matrix; otherwise, it is irreducible.
\begin{theorem}
Let $G$ be a connected graph on $n$ vertices such that $\mathcal{E}(G)$ is an irreducible matrix. Then the following statements are equivalent:
    \begin{enumerate}
        \item[(i)] $G$ is $\mathcal{E}$-bipartite;
        \item[(ii)] $\mathcal{E}$-spectrum is symmetric about origin;
        \item[(iii)] $-\rho(\mathcal{E}(G))\in\sigma(\mathcal{E}(G))$;
        \item[(iv)]
        $\mathcal{E^L}(G)$ and $\mathcal{E^Q}(G)$ are similar matrices. 
        
    \end{enumerate}
\end{theorem}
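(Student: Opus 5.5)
I would prove the cycle (i)$\Rightarrow$(ii)$\Rightarrow$(iii)$\Rightarrow$(i), and separately (i)$\Rightarrow$(iv)$\Rightarrow$(i).

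\emph{(i)$\Rightarrow$(ii).} If $G$ is $\mathcal{E}$-bipartite, a permutation similarity brings $\mathcal{E}(G)$ to the form $\begin{bmatrix} O & B\\ B^T & O\end{bmatrix}$. Lemma~\ref{bipartite lemma} then shows that the spectrum is symmetric about zero.

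\emph{(ii)$\Rightarrow$(iii).} Since $\mathcal{E}(G)$ is nonnegative, $\rho(\mathcal{E}(G))$ is an eigenvalue. Symmetry of the spectrum then gives $-\rho(\mathcal{E}(G))\in\sigma(\mathcal{E}(G))$.

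\emph{(iii)$\Rightarrow$(i).} This step uses Perron--Frobenius for the irreducible symmetric nonnegative matrix $\mathcal{E}=\mathcal{E}(G)$. Let $\mathcal{E}\mathbf{x}=-\rho\mathbf{x}$ with $\mathbf{x}\neq 0$. Then
\[
\rho\,\mathbf{x}^T\mathbf{x}=|\mathbf{x}^T\mathcal{E}\mathbf{x}|\le |\mathbf{x}|^T\mathcal{E}|\mathbf{x}|\le \rho\,|\mathbf{x}|^T|\mathbf{x}|,
\]
so equality holds throughout. Hence $|\mathbf{x}|$ is a Perron vector, and irreducibility makes it strictly positive. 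Equality in the triangle inequality also forces $\mathcal{E}_{ij}x_ix_j\le 0$ for every $i,j$, i.e.\ every nonzero entry $\mathcal{E}_{ij}$ joins a positive coordinate to a negative one. Set $V_1=\{i:x_i>0\}$ and $V_2=\{i:x_i<0\}$. These partition $V(G)$, since no $x_i$ vanishes. Every nonzero entry of $\mathcal{E}$ lies between $V_1$ and $V_2$, so ordering $V_1$ before $V_2$ gives the required block form.

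\emph{(i)$\Rightarrow$(iv).} Label the vertices by the bipartition, so that $\operatorname{Diag}(\mathrm{Tr}_{\mathcal E}(G))=\operatorname{Diag}(D_1,D_2)$. Then
\[
\mathcal{E^L}(G)=\begin{bmatrix} D_1 & -B\\ -B^T & D_2\end{bmatrix},\qquad \mathcal{E^Q}(G)=\begin{bmatrix} D_1 & B\\ B^T & D_2\end{bmatrix}.
\]
Lemma~\ref{similar matrices} shows directly that these two matrices are similar.

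\emph{(iv)$\Rightarrow$(i).} This is the main obstacle. Similar matrices share their spectra, so $\xi_n^{\mathcal L}=0$ (the vector $\mathbf{1}$ lies in the kernel of $\mathcal{E^L}(G)$) forces $0\in\sigma(\mathcal{E^Q}(G))$. Take $\mathbf{y}\ne 0$ with $\mathcal{E^Q}(G)\mathbf{y}=\mathbf{0}$. Then
\[
0=\mathbf{y}^T\mathcal{E^Q}(G)\mathbf{y}=\sum_{i<j}\mathcal{E}_{ij}(y_i+y_j)^2,
\]
so $y_i=-y_j$ whenever $\mathcal{E}_{ij}\neq 0$. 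Irreducibility of $\mathcal{E}$ means its nonzero pattern forms a connected graph on $V(G)$. Propagating along that graph gives $|y_i|$ constant and nonzero for all $i$. The sign classes of $\mathbf{y}$ then split $V(G)$ so that every nonzero entry of $\mathcal{E}$ joins the two classes, which is the $\mathcal{E}$-bipartite form.

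The delicate points are the irreducibility/connectivity argument in this last step and the strict positivity of $|\mathbf{x}|$ in (iii)$\Rightarrow$(i); both rest on the hypothesis that $\mathcal{E}(G)$ is irreducible.
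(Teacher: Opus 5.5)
Your proof is correct. Steps (i)$\Rightarrow$(ii)$\Rightarrow$(iii) and (i)$\Rightarrow$(iv) coincide with the paper's. Your (iii)$\Rightarrow$(i) is also the paper's argument: $|\mathbf{x}|$ is forced to be the positive Perron vector, so $\mathbf{x}$ is a signed copy of it. The only difference there is cosmetic. You read off the bipartition from equality in the triangle inequality ($\mathcal{E}_{ij}x_ix_j\le 0$ for all $i,j$). The paper instead splits $\mathcal{E}(G)$ along the signature and combines the two eigen-equations to get $B_1u_1=0$ and $B_3u_2=0$.

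The genuinely different step is (iv)$\Rightarrow$(i).

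\emph{Paper's route.} It works at the top of the spectrum. Similarity puts $\rho(\mathcal{E^Q}(G))$ into $\sigma(\mathcal{E^L}(G))$. Since $|\mathcal{E^L}(G)|=\mathcal{E^Q}(G)$ entrywise, a corresponding eigenvector $\mathbf{z}$ must satisfy $|\mathbf{z}|=\mathbf{w}$, the Perron vector of $\mathcal{E^Q}(G)$. Comparing the block equations for $\mathbf{w}$ and $\mathbf{z}=S\mathbf{w}$ then kills the diagonal blocks.

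\emph{Your route.} You work at the bottom of the spectrum. The relation $\mathcal{E^L}(G)\mathbf{1}=\mathbf{0}$ transfers to a null vector $\mathbf{y}$ of $\mathcal{E^Q}(G)$. The identity $\mathbf{y}^T\mathcal{E^Q}(G)\mathbf{y}=\sum_{i<j}\mathcal{E}_{ij}(y_i+y_j)^2$, together with connectivity of the nonzero pattern of $\mathcal{E}(G)$, then finishes.

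\emph{What each buys.} Your route needs no Perron--Frobenius in this direction. It also shows that (iv) can be weakened to ``$\mathcal{E^Q}(G)$ is singular''. This is the analogue of the classical fact that a connected graph is bipartite iff $0$ is a signless Laplacian eigenvalue. The paper's route shows instead that (iv) can be weakened to $\rho(\mathcal{E^L}(G))=\rho(\mathcal{E^Q}(G))$. It never uses that the diagonal entries are the row sums of $\mathcal{E}(G)$, so it applies verbatim to $D\pm\mathcal{E}(G)$ for any nonnegative diagonal $D$. Your argument needs the row-sum diagonal twice: for $\mathbf{1}$ to be a null vector of $\mathcal{E^L}(G)$, and for the quadratic form of $\mathcal{E^Q}(G)$ to be a sum of squares.
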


\begin{proof}
 From Lemma \ref{bipartite lemma} (i)$\implies$(ii). Since for any nonnegative matrix, the spectral radius is an eigenvalue, so (ii)$\implies$(iii). 
 
 (iii)$\implies$ (i): Let $\mathcal{E}(G)=\begin{bmatrix}
     B_1 & B_2\\
     B_{2}^T & B_3 
 \end{bmatrix}$. As $\mathcal{E}(G)$ is a non-negative irreducible matrix and  $- \rho(\mathcal{E}(G))$ is an eigenvalue of $\mathcal{E}(G)$, there exist unit vectors $\mathbf{u}>0 $ and $\mathbf{v}$ such that $$\mathcal{E}(G)\mathbf{u}=\rho(\mathcal{E}(G))\mathbf{u}~~\text{and}~~\mathcal{E}(G)\mathbf{v}=-\rho(\mathcal{E}(G))\mathbf{v}.$$ This gives $|\mathcal{E}(G)\mathbf{v}|=\rho(\mathcal{E}(G))|\mathbf{v}|$ which implies $\rho(\mathcal{E}(G))|\mathbf{v}|\leq \mathcal{E}(G))|\mathbf{v}|$. Since $|\mathbf{v}|$ is a non-negative unit vector, we have  $|\mathbf{v}|^T\rho(\mathcal{E}(G))|\mathbf{v}|\leq |\mathbf{v}|^T\mathcal{E}(G))|\mathbf{v}|$. So $\rho(\mathcal{E}(G))\leq |\mathbf{v}|^T\mathcal{E}(G))|\mathbf{v}|$. From the fact $\rho(\mathcal{E}(G))=\max\{\mathbf{x}^T\mathcal{E}(G)\mathbf{x}:\mathbf{x} ~\text{is a unit vector}\}$ and the maximal value is achieved in the eigenvector of $\mathcal{E}(G)$ associated with $\rho(\mathcal{E}(G))$. So we get $\mathcal{E}(G)|\mathbf{v}|=\rho(\mathcal{E}(G))|\mathbf{v}|$. As $u$ is the unique unit eigenvector that corresponds to $\rho(\mathcal{E}(G))$, we must have $|\mathbf{v}|=\mathbf{u}$.  This implies that there exists a signature matrix $S$ such that $\mathbf{v}=S\mathbf{u}$. Without loss of generality, we now assume that $S$ is of the form  $S=\begin{bmatrix}
     I_k & O\\
     O & -I_{n-k}
 \end{bmatrix}$. If we take $\mathbf{u}=\begin{bmatrix}
     u_1 \\ u_2
 \end{bmatrix}$, then  $\mathbf{v}=\begin{bmatrix}
     u_1\\-u_2
 \end{bmatrix}$. By solving the following two equations $\begin{bmatrix}
     B_1 & B_2\\
     B_{2}^T & B_3 
 \end{bmatrix}\begin{bmatrix}
     u_1 \\ u_2
 \end{bmatrix}=\rho(\mathcal{E}(G))\begin{bmatrix}
     u_1 \\ u_2
 \end{bmatrix}$ and  $\begin{bmatrix}
     B_1 & B_2\\
     B_{2}^T & B_3 
 \end{bmatrix}\begin{bmatrix}
     u_1 \\ -u_2
 \end{bmatrix}=-\rho(\mathcal{E}(G))\begin{bmatrix}
     u_1 \\ -u_2
 \end{bmatrix}$, we get $B_1$ and $B_3$ are zero matrices. Hence $\mathcal{E}(G)=\begin{bmatrix}
     O & B_2\\
     B_{2}^T & O
 \end{bmatrix}$. 
 
 (i) $\Longleftrightarrow$ (iv): (i)$\implies$(iv) holds by Lemma \ref{similar matrices}. Now we will prove (iv) $\implies $ (i). Assume that $\mathcal{E^L}(G)$ and  $\mathcal{E^Q}(G)$ are similar matrices. As $\mathcal{E}(G)$  is irreducible, $\mathcal{E^Q}(G)$ is an irreducible non-negative matrix. By Perron-Frobenius theorem, there exists a unique unit positive vector $\mathbf{w}$ such that $\mathcal{E^Q}(G)\mathbf{w}=\rho(\mathcal{E^Q}(G))\mathbf{w}$.  Since $\mathcal{E^L}(G)$ and  $\mathcal{E^Q}(G)$ are similar matrices,  there is a unit vector $\mathbf{z}$ satisfies $\mathcal{E^L}(G)\mathbf{z}=\rho(\mathcal{E^Q}(G))\mathbf{z}$. This gives \begin{equation*}
    \rho(\mathcal{E^Q}(G))|\mathbf{z}|= |\mathcal{E^L}(G)\mathbf{z}|\leq|\mathcal{E^L}(G)||\mathbf{z}| \leq|\mathcal{E^L}(G)||\mathbf{z}|\leq|\mathcal{E^Q}(G)||\mathbf{z}|.
 \end{equation*}
 As $|z|\geq 0$ and $||z||=1$, we get $\rho(\mathcal{E^Q}(G))=|\mathbf{z}|^T\rho(\mathcal{E^Q}(G))|\mathbf{z}|\leq|\mathbf{z}|^T\mathcal{E^Q}(G)|\mathbf{z}|\leq\rho(\mathcal{E^Q}(G))$. Hence $\mathcal{E^Q}(G)|\mathbf{z}|=\rho(\mathcal{E^Q}(G)|\mathbf{z}|$ which implies $|\mathbf{z}|=\mathbf{w}$. So, $\mathbf{z}=S\mathbf{w}$ for some signature matrix $S$. No loss of generality, we assume that $S=\begin{bmatrix}
     I_k & O\\
     O & -I_{n-k}
 \end{bmatrix}$. If we set $\mathbf{w}=\begin{bmatrix}
     w_1 \\ w_2
 \end{bmatrix}$, then  $\mathbf{z}=S\mathbf{w}= \begin{bmatrix}
     w_1\\-w_2
 \end{bmatrix}$. Let eccentricity matrix $\mathcal{E}(G)=\begin{bmatrix}
    B_1& B_2\\
     B_{2}^T & B_3   
 \end{bmatrix}$ and  let transmission matrix $D=\begin{bmatrix}
    D_1& 0\\
     0 & D_2  
 \end{bmatrix}.$
As $\mathcal{E^Q}(G))\mathbf{w}= \rho(\mathcal{E^Q} (G)) \mathbf{w}$ and $\mathcal{E^L}(G))\mathbf{z}= -\rho(\mathcal{E^Q} (G)) \mathbf{z}$, we have 
\begin{tabular}{l l}
   $B_1 w_1 +D_1 w_1 +B_2 w_2 =\rho(\mathcal{E^Q} (G)) w_1$ &  and ~$B^T_2 w_1 +B_3 w_2 +D_2 w_2=\rho(\mathcal{E^Q} (G)) w_2$\\
   $B_1 w_1 -D_1 w_1 -B_2 w_2 =-\rho(\mathcal{E^Q} (G)) w_1$ &  and ~$B^T_2 w_1 -B_3 w_2 +D_2 w_2=\rho(\mathcal{E^Q} (G)) w_2$.
\end{tabular}

By solving the above equations, we get
 $B_1$ and $B_3$ are zero matrices, hence $G$ is a $\mathcal{E}$-bipartite graph. This completes the proof.
\end{proof}
 
\section*{Conclusion}
This work establishes the eccentricity Laplacian and signless Laplacian as powerful spectral tools, revealing equivalences with the eccentricity spectrum for $\mathcal{E}$-transmission regular graphs, graphs with diameter $2$, and a full characterization of $\mathcal{E}$-bipartite graphs through spectral symmetry and matrix similarity.

 
\end{document}